\newtheorem{theorem}{Theorem}
\newtheorem{lemma}[theorem]{Lemma}
\newtheorem{proposition}[theorem]{Proposition}
\theoremstyle{definition}
\newtheorem{definition}[theorem]{Definition}
\theoremstyle{remark}
\newtheorem{remark}[theorem]{Remark}
\begin{document}

\title[Asymptotics for Kendall's renewal function]{Asymptotics for Kendall's renewal function}%
\author[M. Cadena, B. H. Jasiulis-Go{\l}dyn, and E. Omey]{{\bf Meitner Cadena}\\
{\it DECE, Universidad de las Fuerzas Armadas, Sangolqui, Ecuador}
\\
{\bf Barbara H. Jasiulis-Go{\l}dyn}\\
{\it Institute of Mathematics, University of Wroc{\l}aw, pl. Grunwaldzki 2/4, 50-384 Wroc{\l}aw, Poland}
\\
{\bf Edward Omey}\\
{\it Faculty of Economics and Business-Campus Brussels, KU Leuven, Warmoesberg 26, 1000 Brussels, Belgium}\\
}%
\email{\newline Meitner Cadena <mncadena2@espe.edu.ec>\newline
Barbara H. Jasiulis-Go{\l}dyn <jasiulis@math.uni.wroc.pl>\newline
Edward Omey <edward.omey@kuleuven.be>
}%

\subjclass{60K05; 26A12; 60E99; 41A25}%
\keywords{Kendall random walk; Renewal theory; Regularly varying function; Gamma class; Blackwell theorem;
Convergence rates}
\begin{abstract}
An elementary renewal theorem and a Blackwell theorem provided by Jasiulis-Go{\l}dyn et al. (2020) in a setting of Kendall convolutions
are proved under weaker hypothesis and extended to the Gamma class.
Convergence rates of the limits concerned in these theorems are analyzed.
\end{abstract}
\maketitle

\section{Introduction}

Recently Jasiulis-Go{\l}dyn, Misiewicz, Naskr{\k e}t \& Omey have formulated a renewal theory for a class of extremal Markov sequences connected with the
Kendall convolution \cite{Jasiulis}.
Such a convolution that is based on the
Williamson transform of a random variable is an example of a generalized convolution introduced by Urbanik \cite{Urbanik}.

Consider a sequence of occurrences of events $T_1$, $T_2$, \ldots, where $T_i$ are independent and identically distributed random variables
with common distribution function $F$.
The sums $S_n=T_1+\cdots+T_n$, with the convention $S_0=0$, then form a sequence called a renewal sequence if $F(0)=0$.
On the other hand, $R(t)=\sum_{n=0}^{\infty}P(S_n\leq t)$ is the renewal function.
For more details on renewal theory, we refer the reader to \cite{Asmussen} and e.g. \cite{Mitov,Rolski}.
These authors studied $R(t)$ when the usual convolutions involved in this function are interchanged by Kendall convolutions. In \cite{Jasiulis} the renewal function was defined by $R_{\star}(t)=\sum_{n=1}^{\infty}P(S_n\leq t)$. Here we will investigate asymptotic behaviour of $R(t) = 1 + R_{\star}(t)$ in the Kendall convolution sense, similarly to the classical renewal theory, which seems to be much more convenient for the new results.
In this setting, among other results, assuming $m(\alpha)=E(T_1^{\alpha})<\infty$, they proved the elementary renewal theorem

\begin{equation}\label{eq01}
\lim_{x\to\infty}\frac{R_{\star}(x)}{x^{\alpha}}=\frac{2}{m(\alpha)}\textrm{,}
\end{equation}

and the Blackwell theorem

\begin{equation}\label{eq02}
\lim_{x\to\infty}\left(\frac{R_{\star}(x+h)}{(x+h)^{\alpha-1}}-\frac{R_{\star}(x)}{x^{\alpha-1}}\right)=\frac{2h}{m(\alpha)}\textrm{,}
\end{equation}

for some $h\in\mathbb{R}$.
Interestingly, these results were obtained when $\overline{F}=1-F$ is regularly varying.

In this paper we prove (\ref{eq01}) without considering the assumption on regular variation mentioned above
and prove (\ref{eq02}) by introducing a general condition on $\overline{F}$.
It is proved that such a general condition is satisfied when $\overline{F}$ is regularly varying or belongs to the Gamma class.
We also analyze convergence rates of the limits (\ref{eq01}) and (\ref{eq02}) in the cases where $\overline{F}$ is regularly varying or belongs to the Gamma class.
Our new results on convergence rates contribute to other obtained by a number of scholars in renewal theory, for instance
\cite{Rogozin1973}, \cite{Kalashnikov}, \cite{Rogozin1977} and \cite{Grubel}.


In the 
next section
of the paper we give an overview of important definitions
and properties that are going to be used.
For instance, Kendall convolution, Williamson transform, renewal function, the class of regularly varying functions and the Gamma class of functions.
In Section \ref{ratesofconvergenceintherenewaltheorem} we study rates of convergence in the
renewal theorem. It turns out that we have to consider two important cases.\
The first case is the case where the tail of the random variable $X$ is regularly
varying. The second case is the case where the tail of $X$ belongs to the
Gamma class of distribution functions. Then, in Section \ref{Blackwell} we study Blackwell type of
results, i.e. we study the asymptotic behaviour of $R(x+y)-R(x)$ and that of
the derivative $R^{\prime }(x)$. Again we study rates of convergence and we
distinguish between the two cases as in the previous section.
Section \ref{ratesofconvergenceBlackwell} presents these last results on rates of convergence.
We close the
paper with some concluding remarks given in the last section.

\section{Notations and definitions}

We write $f(x)\sim g(x)$ by $f(x)\big/g(x)\to1$ as $x\to\infty$.

\subsection{Transforms}

In what follows $X$ is a positive random variable (r.v.) with distribution function (d.f.) $F(x)=P(X\leq x)$ and
$F(0-)=0$.
$F$ is assumed continuous.
The tail of $F$ is given by $\overline{F}(x)=1-F(x)$. Let $\alpha$ denote a positive real number. We define truncated moments as follows

\begin{equation*}
H_{\alpha }(x)=\int_{0}^{x}y^{\alpha }dF(y)\text{,}
\end{equation*}%
and

\begin{equation*}
W_{\alpha }(x)=\int_{0}^{x}y^{\alpha -1}\overline{F}(y)dy\text{.}
\end{equation*}%
Throughout we assume that the $\alpha -$th moment is finite, and we write $%
m(\alpha )=H_{\alpha }(\infty )=\alpha W_{\alpha }(\infty )$. The case of $%
m(\alpha )=\infty $ has been treated in \cite{Jasiulis} and
in \cite{Omey}, \cite{Kevei}. Note that by dominated convergence
we have $\lim_{x\rightarrow \infty }x^{-\alpha }H_{\alpha }(x)=0$.

The Williamson or $G-$transform is given by%
\begin{equation*}
G_{F}(x)=\int_{0}^{x}\left(1-\left(\frac{t}{x}\right)^{\alpha }\right)dF(t)\text{.}
\end{equation*}%
See \cite{Williamson} and e.g. \cite{Jasiulis}. The
probabilistic interpretation of $G_{F}(x)$ is as follows. Let $Z$ denote a
positive r.v. with $P(Z\leq x)=x^{\alpha },0\leq x\leq 1$. In this case we
see that $G_{F}(x)=\int_{0}^{x}P(Z\geq t/x)dF(t)=P(Z\geq X/x)$ or $%
G_{F}(x)=P(X/Z\leq x)$. Hence $G_{F}(x)$ is again a d.f. (with $G_{F}(0-)=0$%
).

For nondecreasing functions $B(x)$ with $B(0-)=0$ we can also define its $G-$%
transform as $G_{B}(x)=\int_{0}^{x}(1-(\frac{t}{x})^{\alpha })dB(t)$.

\subsection{Relationships}

Using partial integration we have $G_{F}(x)=\alpha x^{-\alpha
}\int_{0}^{x}t^{\alpha -1}F(t)dt$, and then also that%
\begin{equation*}
\overline{G}_{F}(x)=\alpha x^{-\alpha }\int_{0}^{x}t^{\alpha -1}\overline{F}%
(t)dt=\alpha x^{-\alpha }W_{\alpha }(x)\text{,}
\end{equation*}%
where $\overline{G}_{F}(x)=1-G_{F}(x)$. Also we have the following inversion
formula, cf. \cite{Jasiulis2} and \cite{Jasiulis}:%
\begin{equation*}
F(x)=G_{F}(x)+\frac{x}{\alpha }G_{F}^{\prime }(x)\text{.}
\end{equation*}%
For nondecreasing functions $B(x)$ with $B(0)=0$, we equally have
\begin{equation*}
B(x)=G_{B}(x)+\frac{x}{\alpha }G_{B}^{\prime }(x)\text{.}
\end{equation*}

The functions introduced above are closely related to each other. Using
partial integration we have%
\begin{equation*}
H_{\alpha }(x)=\alpha \int_{0}^{x}y^{\alpha -1}\overline{F}(y)dy-x^{\alpha }%
\overline{F}(x)=\alpha W_{\alpha }(x)-x^{\alpha }\overline{F}(x)\text{.}
\end{equation*}%
One can show cf \cite{Omey} that we can find back $\overline{F}%
(x)$ from $H_{\alpha }(x)$ and we have:%
\begin{equation*}
\overline{F}(x)=\alpha \int_{x}^{\infty }z^{-\alpha -1}H_{\alpha
}(z)dz-x^{-\alpha }H_{\alpha }(x)\text{.}
\end{equation*}%
Also we have $\alpha W_{\alpha }(x)=H_{\alpha }(x)+x^{\alpha }\overline{F}%
(x)$. In terms of $H_{\alpha }(x)$, we have $G_{F}(x)=F(x)-x^{-\alpha
}H_{\alpha }(x)$ and
\begin{equation*}
\overline{G}_{F}(x)=\overline{F}(x)+x^{-\alpha }H_{\alpha }(x)\text{.}
\end{equation*}

Since $m(\alpha )<\infty $, we use the following notations and identities:%
\begin{eqnarray*}
\overline{W}_{\alpha }(x) &=&W_{\alpha }(\infty )-W_{\alpha
}(x)=\int_{x}^{\infty }y^{\alpha -1}\overline{F}(y)dy\text{,} \\
\overline{H}_{\alpha }(x) &=&H_{\alpha }(\infty )-H_{\alpha }(x)=\alpha
\overline{W}_{\alpha }(x)+x^{\alpha }\overline{F}(x)\text{,} \\
\alpha \overline{W}_{\alpha }(x) &=&m(\alpha )-x^{\alpha }\overline{G}_{F}(x)
\end{eqnarray*}

We summarize some of the formulas that we obtained in the following
propositions. Cf. \cite{Omey}.

\begin{proposition}
We have:

\begin{itemize}
	\item[(i)] $H_{\alpha }(x)=\alpha \int_{0}^{x}t^{\alpha -1}\overline{F}%
(t)dt-x^{\alpha }\overline{F}(x)$.

	\item[(ii)] $\overline{F}(x)=\alpha \int_{x}^{\infty }z^{-\alpha -1}H_{\alpha
}(z)dz-x^{-\alpha }H_{\alpha }(x)$.

	\item[(iii)] $\overline{F}(x)=\overline{G}_{F}(x)-x^{-\alpha }H_{\alpha }(x)$.

	\item[(iv)] $\overline{G}_{F}(x)=\alpha x^{-\alpha }W_{\alpha }(x)$.

\end{itemize}
\end{proposition}

For the 'tails' we have the following relationships.

\begin{proposition}
If $m(\alpha )<\infty $, we have:

\begin{itemize}
	\item[(i)] $\overline{W}_{\alpha }(x)=\int_{x}^{\infty }y^{\alpha -1}\overline{F}%
(y)dy$.

\item[(ii)] $\overline{H}_{\alpha }(x)=\alpha \int_{x}^{\infty }t^{\alpha -1}%
\overline{F}(t)dt+x^{\alpha }\overline{F}(x)$.

\item[(iii)] $x^{-\alpha }m(\alpha )-\overline{G}_{F}(x)=\alpha x^{-\alpha }%
\overline{W}_{\alpha }(x)$.
\end{itemize}
\end{proposition}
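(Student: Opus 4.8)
The plan is to obtain all three identities by unwinding the definitions of the truncated quantities and rearranging the formulas of the preceding proposition; the hypothesis $m(\alpha)<\infty$ will enter only to guarantee that the relevant improper integrals converge and that a boundary term vanishes at infinity.

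For (i) I would argue directly from the definition $\overline{W}_{\alpha}(x)=W_{\alpha}(\infty)-W_{\alpha}(x)$. Since $W_{\alpha}(x)=\int_{0}^{x}y^{\alpha-1}\overline{F}(y)\,dy$ and $W_{\alpha}(\infty)=m(\alpha)/\alpha<\infty$, subtraction gives $\overline{W}_{\alpha}(x)=\int_{x}^{\infty}y^{\alpha-1}\overline{F}(y)\,dy$, and this integral is finite for every $x\ge 0$.

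For (ii) I would start from item (i) of the preceding proposition, $H_{\alpha}(x)=\alpha W_{\alpha}(x)-x^{\alpha}\overline{F}(x)$, and pass to the limit $x\to\infty$. The only point that genuinely needs the assumption $m(\alpha)<\infty$ — and hence the main (mild) obstacle — is the fact that $x^{\alpha}\overline{F}(x)\to 0$; I would deduce this from $x^{\alpha}\overline{F}(x)=\int_{(x,\infty)}x^{\alpha}\,dF(y)\le\int_{(x,\infty)}y^{\alpha}\,dF(y)$, whose right-hand side tends to $0$ by dominated convergence because $y^{\alpha}\mathbf{1}_{\{y>x\}}$ decreases pointwise to $0$ and is dominated by $y^{\alpha}\in L^{1}(dF)$. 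Consequently $H_{\alpha}(\infty)=\alpha W_{\alpha}(\infty)=m(\alpha)$, and subtracting the displayed identity from its limiting value yields $\overline{H}_{\alpha}(x)=m(\alpha)-H_{\alpha}(x)=\alpha\bigl(W_{\alpha}(\infty)-W_{\alpha}(x)\bigr)+x^{\alpha}\overline{F}(x)=\alpha\overline{W}_{\alpha}(x)+x^{\alpha}\overline{F}(x)$; rewriting $\overline{W}_{\alpha}(x)$ by means of (i) gives the stated formula.

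For (iii) I would combine item (iv) of the preceding proposition, $\overline{G}_{F}(x)=\alpha x^{-\alpha}W_{\alpha}(x)$, with (i): multiplying $\overline{W}_{\alpha}(x)=W_{\alpha}(\infty)-W_{\alpha}(x)$ by $\alpha x^{-\alpha}$ gives $\alpha x^{-\alpha}\overline{W}_{\alpha}(x)=\alpha x^{-\alpha}W_{\alpha}(\infty)-\alpha x^{-\alpha}W_{\alpha}(x)=x^{-\alpha}m(\alpha)-\overline{G}_{F}(x)$, which is the claim. Since each step is merely a rearrangement of formulas already established, I do not expect any real difficulty beyond the vanishing of the boundary term $x^{\alpha}\overline{F}(x)$ at infinity, which is the single place where finiteness of the $\alpha$-th moment is used in an essential way.
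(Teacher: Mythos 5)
Your proposal is correct and follows essentially the same route as the paper, which obtains these three identities by subtracting the truncated quantities from their limits at infinity and invoking the previously established relations $\alpha W_{\alpha}(x)=H_{\alpha}(x)+x^{\alpha}\overline{F}(x)$ and $\overline{G}_{F}(x)=\alpha x^{-\alpha}W_{\alpha}(x)$. Your explicit justification that $x^{\alpha}\overline{F}(x)\to 0$ via domination by the tail of $\int y^{\alpha}\,dF(y)$ is a welcome detail that the paper leaves implicit.
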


\subsection{Regular variation and the Gamma class }

For relevant background information about regular variation and the Gamma
class, we refer to \cite{Bingham} and \cite{Geluk}. Here we
recall the main definitions.

\begin{definition}
A positive and measurable function $f(x)$ is regularly
varying with index $\alpha\in\mathbb{R} $ if for all $t>0$ we have%
\begin{equation*}
\lim_{x\rightarrow \infty }\frac{f(tx)}{f(x)}=t^{\alpha }\text{.}
\end{equation*}
\end{definition}

\begin{definition}
A positive and measurable function $f(x)$ is in the
class $\Gamma (g)$ with auxiliary function $g(x)$ if for all real numbers $x$
we have%
\begin{equation*}
\lim_{x\rightarrow \infty }\frac{f(x+tg(x))}{f(x)}=e^{-t}\text{.}
\end{equation*}

The auxiliary function is $g(x)$ self-neglecting (SN), i.e. $\lim_{x\rightarrow \infty
}g(x+tg(x))\big/g(x)=1$, and satisfies $\lim_{x\rightarrow \infty }g(x)\big/x=0$.
\end{definition}

\subsection{The Kendall convolution}

Starting from two d.f.'s $F_{1}(x)=P(X_{1}\leq x)$ and $F_{2}(x)=P(X_{2}\leq
x)$ with $F_{1}(0-)=F_{2}(0-)=0$, we can find their $G-$transforms $%
G_{1}(x)=G_{F_{1}}(x)$ and $G_{2}(x)=G_{F_{2}}(x)$. Now we consider the
product $A(x)=G_{1}(x)G_{2}(x)$. One can prove that there is a d.f. $%
F_{3}(x)=P(X_{3}\leq x)$ so that $A(x)$ is its $G-$transform
when $F_3$ is the Kendall convolution of $F_1$ and $F_2$, cf.
\cite{Jasiulis2} and \cite{Jasiulis}.
Details on the Kendall convolution can be found in \cite{Kendall} and e.g. \cite{Kucharczak} and \cite{Jasiulis3}.
To stress the dependence of $F_{3}(x)$ on $%
F_{1}(x)$ and $F_{2}(x)$, we use the notation $F_{3}(x)=F_{1}\boxtimes
F_{2}(x)=P(X_{1}\boxtimes X_{2}\leq x)$. 
Alternatively we write $X_{3}\overset{d}{%
=}X_{1}\boxtimes X_{2}$.

If $F_{1}(x)=F_{2}(x)=F(x)$, we use the notation $F\boxtimes
F(x)=F^{\boxtimes 2}(x)$ and $S_{\boxtimes 2}=X_{1}\boxtimes X_{2}$. In this way we can
construct the $n-$fold convolution $F^{\boxtimes n}(x)=P(S_{\boxtimes n}\leq
x)$ which has $G-$transform given by $G_{F}^{n}(x)$.

\subsection{Kendall renewal function}

Using the Kendall convolution, the Kendall renewal function is given by
\begin{equation}\label{Kendallrenewalfunction}
R(x)=\sum_{n=0}^{\infty }F^{\boxplus n}(x)\text{.}
\end{equation}%
where $F^{\boxtimes 0}(x)=\delta _{0}(x)$. Its $G-$transform is very simple:
\begin{equation*}
G_{R}(x)=\sum_{n=0}^{\infty }G_{F^{\boxplus n}}(x)=\sum_{n=0}^{\infty
}G_{F}^{n}(x)=\frac{1}{\overline{G}_{F}(x)}\text{.}
\end{equation*}%
The inversion formula then gives%
\begin{equation}
R(x)=\frac{2}{\overline{G}_{F}(x)}-\frac{\overline{F}(x)}{\overline{G}%
_{F}^{2}(x)}\textrm{.}  \label{1}
\end{equation}%
To see this, we make some calculations:%
\begin{eqnarray*}
R(x) &=&G_{R}(x)+\frac{x}{\alpha }G_{R}^{\prime }(x) \\
&=&\frac{1}{\overline{G}_{F}(x)}+\frac{1}{\overline{G}_{F}^{2}(x)}\frac{x}{%
\alpha }G_{F}^{\prime }(x) \\
&=&\frac{1}{\overline{G}_{F}(x)}+\frac{1}{\overline{G}_{F}^{2}(x)}(\overline{%
G}_{F}(x)-\overline{F}(x)) \\
&=&\frac{2}{\overline{G}_{F}(x)}-\frac{\overline{F}(x)}{\overline{G}%
_{F}^{2}(x)}\textrm{.}
\end{eqnarray*}

Using (\ref{1}) we have%
\begin{equation*}
\frac{R(x)}{x^{\alpha }}=\frac{2}{x^{\alpha }\overline{G}_{F}(x)}-\frac{%
x^{\alpha }\overline{F}(x)}{x^{2\alpha }\overline{G}_{F}^{2}(x)}\text{.}
\end{equation*}%
Since $x^{\alpha }\overline{G}_{F}(x)\rightarrow m(\alpha )$ and $x^{\alpha }%
\overline{F}(x)\rightarrow 0$ as $x\to\infty$, we obtain that
\begin{equation}
\lim_{x\to\infty}\frac{R(x)}{x^{\alpha }}= \frac{2}{m(\alpha )}\text{,}  \label{2}
\end{equation}%
cf. \cite{Jasiulis}.
In the next sections we study the rate of
convergence in (\ref{2}). We summarize our findings in the next proposition.

\begin{proposition}\label{prop2}
Let $R(x)$ be the Kendall renewal function defined by (\ref{Kendallrenewalfunction}).
We have
\begin{equation*}
R(x)=\frac{2}{\overline{G}_{F}(x)}-\frac{\overline{F}(x)}{\overline{G}%
_{F}^{2}(x)}\text{,}
\end{equation*}%
and
\begin{equation*}
\lim_{x\to\infty}\frac{R(x)}{x^{\alpha }}= \frac{2}{m(\alpha )}\text{.}
\end{equation*}
\end{proposition}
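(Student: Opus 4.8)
The plan is essentially to observe that Proposition~\ref{prop2} is a bookkeeping statement that assembles facts already derived in the text, so the proof consists of citing the inversion formula and then taking the limit carefully.

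First I would establish the closed form for $R(x)$. The $G$-transform of $R$ is computed by summing the geometric series $\sum_{n\geq 0}G_F^n(x)=1/\overline{G}_F(x)$, which is legitimate because $0\le G_F(x)<1$ for $x$ in the support (indeed $\overline{G}_F(x)=\alpha x^{-\alpha}W_\alpha(x)>0$ by Proposition~(iv), and $G_F(x)\le F(x)\le 1$ with strict inequality since $\overline F(x)>0$ off a bounded set and $H_\alpha(x)>0$). Then I would apply the inversion formula $B(x)=G_B(x)+\frac{x}{\alpha}G_B'(x)$ with $B=R$, differentiate $1/\overline{G}_F(x)$ to get $G_R'(x)=G_F'(x)/\overline{G}_F^2(x)$, and substitute the already-recorded identity $\frac{x}{\alpha}G_F'(x)=\overline{G}_F(x)-\overline{F}(x)$ (this comes from $F(x)=G_F(x)+\frac{x}{\alpha}G_F'(x)$, i.e. $\overline F = \overline G_F - \frac{x}{\alpha}G_F'$). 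Collecting terms yields $R(x)=2/\overline{G}_F(x)-\overline{F}(x)/\overline{G}_F^2(x)$; this is exactly the block of displayed computations preceding the proposition, so I would just reproduce that short chain.

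Next I would divide by $x^\alpha$ and rewrite as
\begin{equation*}
\frac{R(x)}{x^\alpha}=\frac{2}{x^\alpha\overline{G}_F(x)}-\frac{x^\alpha\overline{F}(x)}{\bigl(x^\alpha\overline{G}_F(x)\bigr)^2}\text{.}
\end{equation*}
By Proposition~(iv), $x^\alpha\overline{G}_F(x)=\alpha W_\alpha(x)\to\alpha W_\alpha(\infty)=m(\alpha)$, which is finite and positive by the standing assumption $0<m(\alpha)<\infty$. And $x^\alpha\overline{F}(x)\to 0$: this is noted in the text right after the definition of $H_\alpha$, and follows since $x^\alpha\overline F(x)\le \alpha\int_x^\infty t^{\alpha-1}\overline F(t)\,dt=\alpha\overline W_\alpha(x)\to 0$ as the tail of a convergent integral (alternatively, $x^\alpha\overline F(x)\le \int_x^\infty t^\alpha\,dF(t)\le\overline H_\alpha(x)+\cdots\to0$). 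Passing to the limit term by term then gives $\lim_{x\to\infty}R(x)/x^\alpha=2/m(\alpha)$.

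There is no real obstacle here; the only point needing a word of care is the legitimacy of the termwise limit, which is immediate once one knows the denominator converges to a nonzero constant and the second numerator converges to $0$. I would simply remark that both are continuous functions of the two convergent quantities $x^\alpha\overline G_F(x)\to m(\alpha)$ and $x^\alpha\overline F(x)\to 0$, so the result follows. Since everything invoked has already been proved in the preceding subsections, the proof is two or three lines.
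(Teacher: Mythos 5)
Your proof is correct and follows the paper's own argument exactly: the geometric series for $G_R(x)=1/\overline{G}_F(x)$, the inversion formula combined with $\tfrac{x}{\alpha}G_F'(x)=\overline{G}_F(x)-\overline{F}(x)$ to get the closed form, and then the termwise limit using $x^\alpha\overline{G}_F(x)\to m(\alpha)$ and $x^\alpha\overline{F}(x)\to 0$. One small slip worth noting: the bound $x^\alpha\overline{F}(x)\le\alpha\overline{W}_\alpha(x)$ you offer in passing is not valid in general, but your alternative justification $x^\alpha\overline{F}(x)\le\int_x^\infty t^\alpha\,dF(t)=\overline{H}_\alpha(x)\to 0$ (using $m(\alpha)<\infty$) is correct and suffices.
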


\subsection{Examples}

\begin{enumerate}
	\item 
	Let $F_{X}(x)=\delta _{1}(x)$. We have $m(\alpha )=1$ and \\
	$%
G_{F}(x)=\int_{0}^{x}(1-(\frac{t}{x})^{\alpha })d\delta _{1}(t)$. For $x\geq
1$ we find that $G_{F}(x)=1-x^{-\alpha }$ and $\overline{G}%
_{F}(x)=x^{-\alpha }$. Cleary for $x\geq 1$ we have the following simple
formula:
\begin{equation*}
R(x)=\frac{2}{\overline{G}_{F}(x)}-\frac{\overline{F}(x)}{\overline{G}%
_{F}^{2}(x)}=2x^{\alpha }\text{.}
\end{equation*}

\item
Let $F(x)=1-x^{-\beta },x\geq 1$. We take $\beta >\alpha $ so that $%
m(\alpha )<\infty $. We have $H_{\alpha }(x)=\int_{0}^{x}y^{\alpha }dF(y)$.
For $x\geq 1$, we find
\begin{equation*}
H_{\alpha }(x)=\beta \int_{1}^{x}y^{\alpha -\beta -1}dy=\frac{\beta }{\beta
-\alpha }(1-x^{\alpha -\beta })\text{.}
\end{equation*}%
Using $G_{F}(x)=F(x)-x^{-\alpha }H_{\alpha }(x)$, it follows that for $x\geq
1$:%
\begin{equation*}
G_{F}(x)=1+\frac{\alpha }{\beta -\alpha }x^{-\beta }-\frac{\beta }{\beta
-\alpha }x^{-\alpha }\text{,}
\end{equation*}%
and%
\begin{equation*}
\overline{G}_{F}(x)=\frac{\beta }{\beta -\alpha }x^{-\alpha }-\frac{\alpha }{%
\beta -\alpha }x^{-\beta }\textrm{.}
\end{equation*}%
Note that $x^{\alpha }\overline{G}_{F}(x)\rightarrow m(\alpha )=\beta
\big/(\beta -\alpha )$ as $x\to\infty$, and we can write $\overline{G}_{F}(x)=m(\alpha
)x^{-\alpha }+(1-m(\alpha ))x^{-\beta }$. Also note that $\overline{F}(x)\big/%
\overline{G}_{F}(x)\rightarrow 0$ as $x\to\infty$.

Using (\ref{1}) we have for $x\geq 1$,%
\begin{eqnarray*}
R(x) &=&\frac{2}{m(\alpha )x^{-\alpha }+(1-m(\alpha ))x^{-\beta }} \\
&&-\frac{x^{-\beta }}{(m(\alpha )x^{-\alpha }+(1-m(\alpha ))x^{-\beta })^{2}}
\\
&=&\frac{2x^{\alpha }}{m(\alpha )+(1-m(\alpha ))x^{\alpha -\beta }} \\
&&-\frac{x^{\alpha }x^{\alpha -\beta }}{(m(\alpha )+(1-m(\alpha ))x^{\alpha
-\beta })^{2}}\textrm{.}
\end{eqnarray*}

It follows that $x^{-\alpha }R(x)\rightarrow 2\big/m(\alpha )$ as $x\to\infty$.

As to the rate of convergence, we study the difference:

\begin{eqnarray*}
R(x)-\frac{2x^{\alpha }}{m(\alpha )} &=&\frac{2x^{\alpha }}{m(\alpha
)+(1-m(\alpha ))x^{\alpha -\beta }}-\frac{2x^{\alpha }}{m(\alpha )} \\
&&-\frac{x^{\alpha }x^{\alpha -\beta }}{(m(\alpha )+(1-m(\alpha ))x^{\alpha
-\beta })^{2}} \\
&=&\frac{-2x^{\alpha }(1-m(\alpha ))x^{\alpha -\beta }}{m(\alpha )(m(\alpha
)+(1-m(\alpha ))x^{\alpha -\beta })} \\
&&-\frac{x^{\alpha }x^{\alpha -\beta }}{(m(\alpha )+(1-m(\alpha ))x^{\alpha
-\beta })^{2}}\textrm{,}
\end{eqnarray*}%
and then, as $x\to\infty$,%
\begin{eqnarray*}
x^{\beta -2\alpha }\left(R(x)-\frac{2x^{\alpha }}{m(\alpha )}\right) &\rightarrow &%
\frac{-2(1-m(\alpha ))}{m^{2}(\alpha )}-\frac{1}{m^{2}(\alpha )} \\
&=&\frac{-3+2m(\alpha )}{m^{2}(\alpha )}\text{.}
\end{eqnarray*}


\item
Let $F_{X}(x)=1-e^{-x},x\geq 0$ and $\alpha =1$. We have $m(1)=1$ and
straightforward calculations show that
\begin{equation*}
G_{F}(x)=\int_{0}^{x}\left(1-\frac{t}{x}\right)e^{-t}dt=1-\frac{F(x)}{x}\text{.}
\end{equation*}%
Hence $\overline{G}_{F}(x)=F(x)\big/x$. For the renewal function, we find
\begin{eqnarray*}
R(x) &=&\frac{2}{\overline{G}_{F}(x)}-\frac{\overline{F}(x)}{\overline{G}%
_{F}^{2}(x)}=\frac{2x}{F(x)}-\frac{x^{2}\overline{F}(x)}{F^{2}(x)} \\
&=&\frac{x}{F(x)}\left(2-\frac{x\overline{F}(x)}{F(x)}\right)\text{.}
\end{eqnarray*}%
It follows that $R(x)\big/x\rightarrow 2$ as $x\to\infty$.

To find the rate of convergence, we
proceed as follows. We have%
\begin{eqnarray*}
R(x)-2x &=&2x\frac{\overline{F}(x)}{F(x)}-\frac{x^{2}\overline{F}(x)}{%
F^{2}(x)} \\
&=&\frac{x^{2}\overline{F}(x)}{F(x)}\left(\frac{2}{x}-\frac{1}{F(x)}\right)\sim
-x^{2}e^{-x}\text{.}
\end{eqnarray*}%
We find that $R(x)\big/x-2\sim -x\overline{F}(x)$. We also have, as $x\to\infty$,%
\begin{eqnarray*}
x\left(\frac{R(x)-2x}{x^{2}\overline{F}(x)}+1\right) &=&\frac{x}{F(x)}\left(\frac{2}{x}-%
\frac{1}{F(x)}\right)+1 \\
&=&\frac{2}{F(x)}-\frac{x\overline{F}(x)(1+F(x))}{F^{2}(x)} \\
&\rightarrow &2\textrm{.}
\end{eqnarray*}

\item
Let us consider distribution with the lack of memory property (for details see \cite{Jasiulis}) in the Kendall convolution algebra with $F_X(x)=x^{\alpha}$, $0<x\leq 1$. Then   $m(\alpha ) = \frac{1}{2}$,  
$$
H_{\alpha }(x) = \int_{0}^{x}y^{\alpha }dF(y) =  \frac{x^{\alpha}}{2} \pmb{1}_{(0,1]}(x) + \frac{1}{2} \pmb{1}_{[1,\infty)}(x)
$$  
and
$$
G_F(x) = \frac{x^{\alpha}}{2} \pmb{1}_{(0,1]}(x) +\left(1 - \frac{1}{2 x^{\alpha}} \right) \pmb{1}_{[1,\infty)}(x).
$$
Consequently
$$
x^{-\alpha} R(x) = 4
$$
and the rate of convergence is equal 0.

\item
Let $ F_X(x) = \left( 1 + x^{-\alpha} \right) e^{- x^{-\alpha}}, x\geq 0$,
where $\alpha > 0$. Notice that it is the limit distribution (see \cite{ArenJasOmey}) in the Kendall convolution algebra corresponding to normal distribution in the classical case. Then the Williamson transform and truncated $\alpha$-moment for $X$ are given by
\[
G_F(x) = H(x)_{\alpha} = \exp\{- x^{-\alpha}\} \pmb{1}_{(0,\infty)}(x).
\]
Hence, $m(\alpha)=1$ and $x^{-\alpha }R(x)\rightarrow 2$ as $x\to\infty$.
Notice that we have
\begin{eqnarray*}
\overline{F}(x) &=&1+(1+x^{-\alpha })(\overline{G}_F(x)-1) \\
&=&(1+x^{-\alpha })\overline{G}_F(x)-x^{-\alpha }\text{.}
\end{eqnarray*}%
Notice that $\overline{G}_F(x)=1-\exp\{ -x^{-\alpha }\}\sim x^{-\alpha }$. Using $%
1-\exp\{-z\} = z-z^{2}/2(1+o(1))$ as $z\rightarrow 0$, we have
\[
\overline{G}_F(x)=x^{-\alpha }-\frac{1}{2}x^{-2\alpha }(1+o(1))\text{,}
\]
and
\[
1-x^{\alpha }\overline{G}_F(x)=\frac{1}{2}x^{-\alpha }(1+o(1))
\]
Returning to $R(x)$, we have 
\begin{eqnarray*}
R(x) &=&\frac{2}{\overline{G}_F(x)}-\frac{(1+x^{-\alpha })\overline{G}
(x)-x^{-\alpha }}{\overline{G}^{2}(x)} \\
&=&\frac{1}{\overline{G}_F(x)}-\frac{1}{x^{\alpha }\overline{G}_F(x)}+\frac{1}{
x^{\alpha }\overline{G}^{2}(x)}
\end{eqnarray*}

Hence 
\begin{eqnarray*}
x^{-\alpha }R(x)-2 &=& \left(\frac{1}{x^{\alpha }\overline{G}_F(x)}-1\right) + \left(\frac{1}{
x^{2\alpha }\overline{G}^{2}(x)}-1 \right)-\frac{1}{x^{2\alpha }\overline{G}_F(x)} \\
&=& \left(\frac{1}{x^{\alpha }\overline{G}_F(x)}-1 \right) \left(\frac{1}{x^{\alpha }\overline{G}
(x)}+2 \right)-\frac{1}{x^{2\alpha }\overline{G}_F(x)} \\
&=&\frac{1}{x^{\alpha }\overline{G}_F(x)}\left(1-x^{\alpha }\overline{G}_F(x)\right) \left(2+
\frac{1}{x^{\alpha }\overline{G}_F(x)}\right)-\frac{1}{x^{2\alpha }\overline{G}_F(x)}
\end{eqnarray*}
and
\begin{eqnarray*}
x^{\alpha }(x^{-\alpha }R(x)-2) &=&\frac{1}{x^{\alpha }\overline{G}_F(x)}
x^{\alpha }(1-x^{\alpha }\overline{G}_F(x))\left(2+\frac{1}{x^{\alpha }\overline{G}
(x)}\right)-\frac{1}{x^{\alpha }\overline{G}_F(x)} \\
&\rightarrow &\frac{1}{2}\times 3-1=\frac{1}{2}
\end{eqnarray*}

\end{enumerate}

\section{Rate of convergence in the renewal theorem}
\label{ratesofconvergenceintherenewaltheorem}

To study the rate of convergence in the renewal theorem (\ref{2}), we start from
(\ref{1}). We have $x^{-\alpha }m(\alpha )-\overline{G}_{F}(x)=\alpha x^{-\alpha }%
\overline{W}_{\alpha }(x)$, cf.\ Proposition \ref{prop2}. Now we write
\begin{eqnarray}
&&R(x)-\frac{2}{x^{-\alpha }m(\alpha )}+\frac{\overline{F}(x)}{(x^{-\alpha
}m(\alpha ))^{2}}  \notag \\
&=&2\left(\frac{1}{\overline{G}_{F}(x)}-\frac{1}{x^{-\alpha }m(\alpha )}\right)-%
\overline{F}(x)\left(\frac{1}{\overline{G}_{F}^{2}(x)}-\frac{1}{(x^{-\alpha
}m(\alpha ))^{2}}\right)  \notag \\
&=&2I-\overline{F}(x)\,II\textrm{.}  \label{3}
\end{eqnarray}

We consider the two terms in (\ref{3}) separately. For the first term we have
\begin{equation*}
I=\frac{\alpha \overline{W}_{\alpha }(x)}{m(\alpha )}\frac{1}{\overline{G}%
_{F}(x)}\textrm{.}
\end{equation*}

Using the same formula again, we also find that%
\begin{equation*}
I-\frac{\alpha x^{\alpha }\overline{W}_{\alpha }(x)}{m^{2}(\alpha )}=\frac{%
\alpha \overline{W}_{\alpha }(x)}{m(\alpha )}\times I=\frac{\alpha ^{2}%
\overline{W}_{\alpha }^{2}(x)}{m^{2}(\alpha )\overline{G}_{F}(x)}\text{.}
\end{equation*}

Since $x^{\alpha }\overline{G}_{F}(x)\rightarrow m(\alpha )$ as $x\to\infty$, it follows that%
\begin{equation*}
I\sim \frac{\alpha x^{\alpha }\overline{W}_{\alpha }(x)}{m^{2}(\alpha )}%
\text{,}
\end{equation*}%
and also that
\begin{equation}
I-\frac{\alpha x^{\alpha }\overline{W}_{\alpha }(x)}{m^{2}(\alpha )}\sim
\frac{\alpha ^{2}x^{\alpha }\overline{W}_{\alpha }^{2}(x)}{m^{3}(\alpha )}\textrm{.}
\label{4}
\end{equation}

Now we consider the second part. We have
\begin{eqnarray*}
II &=&I\times \left(\frac{1}{\overline{G}_{F}(x)}+\frac{1}{x^{-\alpha }m(\alpha )}%
\right) \\
&=&\frac{\alpha \overline{W}_{\alpha }(x)}{m(\alpha )}\frac{x^{2\alpha }}{%
x^{\alpha }\overline{G}_{F}(x)}\left(\frac{1}{x^{\alpha }\overline{G}_{F}(x)}+%
\frac{1}{m(\alpha )}\right)\text{.}
\end{eqnarray*}%
\bigskip

Since $x^{\alpha }\overline{G}_{F}(x)\rightarrow m(\alpha )$ as $x\to\infty$, it follows that%
\begin{equation}
II\sim \frac{2\alpha }{m^{3}(\alpha )}x^{2\alpha }\overline{W}_{\alpha }(x)\textrm{.}
\label{5}
\end{equation}

Combining (\ref{3}),(\ref{4}) and (\ref{5}), we find%
\begin{eqnarray}
&&R(x)-\frac{2x^{\alpha }}{m(\alpha )}-\frac{2\alpha x^{\alpha }\overline{W}%
_{\alpha }(x)}{m^{2}(\alpha )}+\frac{x^{2\alpha }\overline{F}(x)}{%
m^{2}(\alpha )}  \notag \\
&=&2\left(I-\frac{\alpha x^{\alpha }\overline{W}_{\alpha }(x)}{m^{2}(\alpha )}\right)-%
\overline{F}(x)\times II  \notag \\
&=&(1+o(1))\frac{2\alpha ^{2}}{m^{3}(\alpha )}x^{\alpha }\overline{W}%
_{\alpha }^{2}(x)-(1+o(1))\frac{2\alpha }{m^{3}(\alpha )}x^{2\alpha }%
\overline{W}_{\alpha }(x)\overline{F}(x)\textrm{.}  \label{6}
\end{eqnarray}

Now we analyse (\ref{6}) further and we consider two important cases.

\subsection{The case of regular variation}

First suppose that $\overline{F}(x)\in RV_{-\beta }$, $\beta >\alpha $. In
this case, applying Karamata's theorem gives, see \cite{Karamata} and e.g. \cite{deHaan},%
\begin{equation*}
\overline{W}_{\alpha }(x)=\int_{x}^{\infty }y^{\alpha -1}\overline{F}%
(y)dy\sim \frac{1}{\beta -\alpha }x^{\alpha }\overline{F}(x)\textrm{.}
\end{equation*}%
Using (\ref{6}), we have%
\begin{eqnarray*}
&&\frac{1}{x^{3\alpha }\overline{F}^{2}(x)}\left(R(x)-\frac{2x^{\alpha }}{%
m(\alpha )}-\frac{2\alpha x^{\alpha }\overline{W}_{\alpha }(x)}{m^{2}(\alpha
)}+\frac{x^{2\alpha }\overline{F}(x)}{m^{2}(\alpha )}\right) \\
&\rightarrow &\frac{2\alpha ^{2}}{(\beta -\alpha )^{2}m^{3}(\alpha )}-\frac{%
2\alpha }{(\beta -\alpha )m^{3}(\alpha )} \\
&=&\frac{2\alpha (2\alpha -\beta )}{m^{3}(\alpha )(\beta -\alpha )^{2}}\textrm{.}
\end{eqnarray*}

Note that the formula above, implies that, as $x\to\infty$,
\begin{equation*}
R(x)-\frac{2}{m(\alpha )}x^{\alpha }-\frac{2\alpha x^{\alpha }\overline{W}%
_{\alpha }(x)}{m^{2}(\alpha )}+\frac{1}{m^{2}(\alpha )}x^{2\alpha }\overline{%
F}(x)=o(1)x^{2\alpha }\overline{F}(x)\textrm{.}
\end{equation*}

Using $\overline{W}_{\alpha }(x)\sim \frac{1}{\beta -\alpha }x^{\alpha }%
\overline{F}(x)$, we then find that, as $x\to\infty$,
\begin{eqnarray*}
\frac{1}{x^{2\alpha }\overline{F}(x)}\left(R(x)-\frac{2x^{\alpha }}{m(\alpha )}\right)
&\rightarrow &\frac{2\alpha }{(\beta -\alpha )m^{2}(\alpha )}-\frac{1}{%
m^{2}(\alpha )} \\
&=&\frac{3\alpha -\beta }{(\beta -\alpha )m^{2}(\alpha )}\textrm{.}
\end{eqnarray*}

We see that $x^{-\alpha }R(x)\rightarrow 2\big/m(\alpha )$ as $x\to\infty$ with a rate of
convergence determined by $x^{\alpha }\overline{F}(x)$. We summarize

\begin{theorem}
Let $R(x)$ be the Kendall renewal function defined by (\ref{Kendallrenewalfunction}).
If $\overline{F}(x)\in RV_{-\beta },\beta >\alpha $, then
\begin{equation*}
\lim_{x\to\infty}\frac{1}{x^{2\alpha }\overline{F}(x)}\left(R(x)-\frac{2}{m(\alpha )}x^{\alpha
}\right)= \frac{3\alpha -\beta }{(\beta -\alpha )m^{2}(\alpha )}\text{,}
\end{equation*}%
and%
\begin{equation*}
\lim_{x\to\infty}\frac{1}{x^{3\alpha }\overline{F}^{2}(x)}\left(R(x)-\frac{2x^{\alpha }}{%
m(\alpha )}-\frac{2\alpha x^{\alpha }\overline{W}_{\alpha }(x)}{m^{2}(\alpha
)}+\frac{x^{2\alpha }\overline{F}(x)}{m^{2}(\alpha )}\right)
=\frac{2\alpha (2\alpha -\beta )}{m^{3}(\alpha )(\beta -\alpha
)^{2}}\text{.}
\end{equation*}%
\end{theorem}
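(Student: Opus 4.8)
\section{Proof of the Theorem (proposal)}

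The plan is to start from the exact identity (\ref{6}), which already expresses
$R(x)-\frac{2x^{\alpha }}{m(\alpha )}-\frac{2\alpha x^{\alpha }\overline{W}_{\alpha }(x)}{m^{2}(\alpha )}+\frac{x^{2\alpha }\overline{F}(x)}{m^{2}(\alpha )}$
as an explicit combination of $x^{\alpha }\overline{W}_{\alpha }^{2}(x)$ and $x^{2\alpha }\overline{W}_{\alpha }(x)\overline{F}(x)$ with coefficients of the form $1+o(1)$. The only place where the regular variation hypothesis enters is through the asymptotic behaviour of the truncated tail integral $\overline{W}_{\alpha }(x)=\int_{x}^{\infty }y^{\alpha -1}\overline{F}(y)\,dy$; once that is pinned down, both limits in the theorem follow by substitution and elementary bookkeeping of the constants.

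First I would invoke Karamata's theorem. Since $\overline{F}\in RV_{-\beta }$ with $\beta >\alpha $, the integrand $y^{\alpha -1}\overline{F}(y)$ is regularly varying of index $\alpha -1-\beta <-1$, which both forces $m(\alpha )<\infty $ and gives
\[
\overline{W}_{\alpha }(x)\sim \frac{1}{\beta -\alpha }x^{\alpha }\overline{F}(x),\qquad x\to \infty .
\]
Substituting this into the right-hand side of (\ref{6}) yields $x^{\alpha }\overline{W}_{\alpha }^{2}(x)\sim (\beta -\alpha )^{-2}x^{3\alpha }\overline{F}^{2}(x)$ and $x^{2\alpha }\overline{W}_{\alpha }(x)\overline{F}(x)\sim (\beta -\alpha )^{-1}x^{3\alpha }\overline{F}^{2}(x)$, so that the whole right-hand side of (\ref{6}) is asymptotically
\[
\Big(\frac{2\alpha ^{2}}{(\beta -\alpha )^{2}m^{3}(\alpha )}-\frac{2\alpha }{(\beta -\alpha )m^{3}(\alpha )}\Big)x^{3\alpha }\overline{F}^{2}(x)=\frac{2\alpha (2\alpha -\beta )}{(\beta -\alpha )^{2}m^{3}(\alpha )}\,x^{3\alpha }\overline{F}^{2}(x).
\]
Dividing (\ref{6}) by $x^{3\alpha }\overline{F}^{2}(x)$ and passing to the limit gives the second displayed formula of the theorem.

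For the first (coarser) formula I would observe that, since $x^{\alpha }\overline{F}(x)\to 0$, we have $x^{3\alpha }\overline{F}^{2}(x)=o\big(x^{2\alpha }\overline{F}(x)\big)$; hence the formula just proved states precisely that
\[
R(x)-\frac{2x^{\alpha }}{m(\alpha )}-\frac{2\alpha x^{\alpha }\overline{W}_{\alpha }(x)}{m^{2}(\alpha )}+\frac{x^{2\alpha }\overline{F}(x)}{m^{2}(\alpha )}=o\big(x^{2\alpha }\overline{F}(x)\big).
\]
Plugging in once more $\overline{W}_{\alpha }(x)\sim (\beta -\alpha )^{-1}x^{\alpha }\overline{F}(x)$, the third term becomes $\sim \frac{2\alpha }{(\beta -\alpha )m^{2}(\alpha )}x^{2\alpha }\overline{F}(x)$, so
\[
R(x)-\frac{2x^{\alpha }}{m(\alpha )}=\Big(\frac{2\alpha }{(\beta -\alpha )m^{2}(\alpha )}-\frac{1}{m^{2}(\alpha )}\Big)x^{2\alpha }\overline{F}(x)+o\big(x^{2\alpha }\overline{F}(x)\big)=\frac{3\alpha -\beta }{(\beta -\alpha )m^{2}(\alpha )}\,x^{2\alpha }\overline{F}(x)+o\big(x^{2\alpha }\overline{F}(x)\big),
\]
and dividing by $x^{2\alpha }\overline{F}(x)$ gives the claim.

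There is no serious analytic obstacle here: the bulk of the work has already been carried out in assembling the exact relation (\ref{6}) together with the identities $x^{-\alpha }m(\alpha )-\overline{G}_{F}(x)=\alpha x^{-\alpha }\overline{W}_{\alpha }(x)$ and $x^{\alpha }\overline{G}_{F}(x)\to m(\alpha )$. The two points requiring a little care are: (i) checking that $\beta >\alpha $ is exactly what makes $y^{\alpha -1}\overline{F}(y)$ integrable at infinity and legitimises the Karamata asymptotic for $\overline{W}_{\alpha }$; and (ii) making sure the two $(1+o(1))$ factors in (\ref{6}) cause no trouble, which they do not, since each of the two terms on the right of (\ref{6}) is a genuine $O\big(x^{3\alpha }\overline{F}^{2}(x)\big)$ and their prefactors $\frac{2\alpha ^{2}}{(\beta -\alpha )^{2}m^{3}(\alpha )}$ and $\frac{2\alpha }{(\beta -\alpha )m^{3}(\alpha )}$ only cancel in the borderline case $\beta =2\alpha $, for which the stated limit is simply $0$ and the conclusion still holds.
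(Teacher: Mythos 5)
Your proposal is correct and follows essentially the same route as the paper: both start from the identity (\ref{6}), apply Karamata's theorem to get $\overline{W}_{\alpha }(x)\sim (\beta -\alpha )^{-1}x^{\alpha }\overline{F}(x)$, divide by $x^{3\alpha }\overline{F}^{2}(x)$ for the refined limit, and then deduce the coarser limit from the observation that the left side of (\ref{6}) is $o\bigl(x^{2\alpha }\overline{F}(x)\bigr)$. Your extra remark on the borderline case $\beta =2\alpha $ is a welcome clarification the paper leaves implicit, but it does not change the argument.
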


\bigskip

\subsection{The case of the class $\Gamma $}

In the case where $\overline{F}(x)\in \Gamma (g)$, we have $\overline{F}%
(x+yg(x))\big/\overline{F}(x)\rightarrow e^{-y}$ as $x\to\infty$, where $g(x)\in SN$ is an
auxiliary function satisfying $g(x)\big/x\rightarrow 0$ as $x\to\infty$. Clearly also $x^{\alpha
-1}\overline{F}(x)\in \Gamma (g)$, cf. \cite{Bingham} (Chapter 3.10)
or \cite{Geluk} (Chapter I) and we have the following property:

\begin{equation}
\overline{W}_{\alpha }(x)=\int_{x}^{\infty }y^{\alpha -1}\overline{F}%
(y)dy\sim \frac{g(x)}{x}x^{\alpha }\overline{F}(x)\text{.}  \label{7}
\end{equation}%
Using (\ref{6}) and (\ref{7}) leads to%
\begin{eqnarray*}
&&R(x)-\frac{2x^{\alpha }}{m(\alpha )}-\frac{2\alpha x^{\alpha }\overline{W}%
_{\alpha }(x)}{m^{2}(\alpha )}+\frac{x^{2\alpha }\overline{F}(x)}{%
m^{2}(\alpha )} \\
&=&(1+o(1))\frac{g^{2}(x)2\alpha ^{2}x^{3\alpha }\overline{F}^{2}(x)}{%
x^{2}m^{3}(\alpha )}-(1+o(1))\frac{g(x)2\alpha }{xm^{3}(\alpha )}x^{3\alpha }%
\overline{F}^{2}(x) \\
&\sim &-\frac{g(x)}{x}\frac{2\alpha }{m^{3}(\alpha )}x^{3\alpha }\overline{F}%
^{2}(x)\textrm{.}
\end{eqnarray*}%
The last line follows because the first term is dominated by the second term.

\bigskip

Since $x^{\alpha }\overline{F}(x)\rightarrow 0$ as $x\to\infty$, it follows that
\begin{equation*}
R(x)-\frac{2x^{\alpha }}{m(\alpha )}+\frac{x^{2\alpha }\overline{F}(x)}{%
m^{2}(\alpha )}=\frac{2\alpha x^{\alpha }\overline{W}_{\alpha }(x)}{%
m^{2}(\alpha )}+o(1)\frac{g(x)}{x}x^{2\alpha }\overline{F}(x)\textrm{.}
\end{equation*}%
Using $\overline{W}_{\alpha }(x)\sim g(x)x^{\alpha }\overline{F}(x)\big/x$, we
find that, as $x\to\infty$,%
\begin{equation*}
\frac{x}{g(x)x^{2\alpha }\overline{F}(x)}\left(R(x)-\frac{2x^{\alpha }}{m(\alpha )%
}+\frac{x^{2\alpha }\overline{F}(x)}{m^{2}(\alpha )}\right)\rightarrow \frac{%
2\alpha }{m^{2}(\alpha )}\textrm{.}
\end{equation*}

Among others we see that, as $x\to\infty$,%
\begin{equation*}
\frac{1}{x^{\alpha }\overline{F}(x)}\left(x^{-\alpha }R(x)-\frac{2}{m(\alpha )}\right)+%
\frac{1}{m^{2}(\alpha )}\sim \frac{g(x)}{x}\frac{2\alpha }{m^{2}(\alpha )}\textrm{.}
\end{equation*}

and also that, as $x\to\infty$,%
\begin{equation*}
\frac{1}{x^{\alpha }\overline{F}(x)}\left(\frac{2}{m(\alpha )}-x^{-\alpha
}R(x)\right)\rightarrow \frac{1}{m^{2}(\alpha )}\text{,}
\end{equation*}%
and that also here $x^{-\alpha }R(x)\rightarrow 2\big/m(\alpha )$ as $x\to\infty$ at a rate
determined by $x^{\alpha }\overline{F}(x)$. We summarize

\begin{theorem}
Let $R(x)$ be the Kendall renewal function defined by (\ref{Kendallrenewalfunction}).
If $\overline{F}(x)\in \Gamma (g)$, then%
\begin{equation*}
\lim_{x\to\infty}\frac{1}{x^{\alpha }\overline{F}(x)}\left(\frac{2}{m(\alpha )}-x^{-\alpha
}R(x)\right)= \frac{1}{m^{2}(\alpha )}\text{,}
\end{equation*}%
and%
\begin{equation*}
\frac{1}{x^{\alpha }\overline{F}(x)}\left(x^{-\alpha }R(x)-\frac{2}{m(\alpha )}\right)+%
\frac{1}{m^{2}(\alpha )}\sim \frac{g(x)}{x}\frac{2\alpha }{m^{2}(\alpha )}%
\text{.}
\end{equation*}
\end{theorem}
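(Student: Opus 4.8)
The plan is to extract both statements directly from the master identity~(\ref{6}), which already expresses $R(x)$ modulo the two explicit corrections $\frac{2\alpha x^{\alpha}\overline{W}_{\alpha}(x)}{m^{2}(\alpha)}$ and $-\frac{x^{2\alpha}\overline{F}(x)}{m^{2}(\alpha)}$, with a remainder built from $x^{\alpha}\overline{W}_{\alpha}^{2}(x)$ and $x^{2\alpha}\overline{W}_{\alpha}(x)\overline{F}(x)$. The only genuinely new ingredient is the Gamma-class Karamata estimate~(\ref{7}), namely $\overline{W}_{\alpha}(x)\sim\frac{g(x)}{x}x^{\alpha}\overline{F}(x)$, which holds because $x^{\alpha-1}\overline{F}(x)\in\Gamma(g)$ and the monotone-density theorem for $\Gamma(g)$ in \cite{Bingham} (Chapter 3.10) or \cite{Geluk} (Chapter I) applies to $\int_{x}^{\infty}y^{\alpha-1}\overline{F}(y)\,dy$. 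Both of these may be taken as given here.

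First I would feed~(\ref{7}) into the right-hand side of~(\ref{6}). The term coming from $\overline{W}_{\alpha}^{2}(x)$ then carries a factor $(g(x)/x)^{2}$, whereas the cross term carries only $g(x)/x$; since $g(x)/x\to0$ (part of the $\Gamma(g)$ hypothesis), the former is of strictly smaller order, so the whole right-hand side of~(\ref{6}) is asymptotic to $-\frac{g(x)}{x}\frac{2\alpha}{m^{3}(\alpha)}x^{3\alpha}\overline{F}^{2}(x)$. The decisive point is that $x^{\alpha}\overline{F}(x)\to0$ as $x\to\infty$ (already used in deriving~(\ref{2})), whence this remainder is $o(1)\,\frac{g(x)}{x}\,x^{2\alpha}\overline{F}(x)$; thus~(\ref{6}) rearranges into
\[
R(x)-\frac{2x^{\alpha}}{m(\alpha)}+\frac{x^{2\alpha}\overline{F}(x)}{m^{2}(\alpha)}=\frac{2\alpha x^{\alpha}\overline{W}_{\alpha}(x)}{m^{2}(\alpha)}+o(1)\,\frac{g(x)}{x}\,x^{2\alpha}\overline{F}(x).
\]

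Dividing this identity by $x^{2\alpha}\overline{F}(x)$ and invoking $\overline{W}_{\alpha}(x)\big/(x^{\alpha}\overline{F}(x))\sim g(x)/x\to0$ from~(\ref{7}) immediately gives $\frac{1}{x^{2\alpha}\overline{F}(x)}\big(R(x)-\frac{2x^{\alpha}}{m(\alpha)}\big)\to-\frac{1}{m^{2}(\alpha)}$; since $\frac{1}{x^{2\alpha}\overline{F}(x)}\big(R(x)-\frac{2x^{\alpha}}{m(\alpha)}\big)=\frac{1}{x^{\alpha}\overline{F}(x)}\big(x^{-\alpha}R(x)-\frac{2}{m(\alpha)}\big)$, multiplying by $-1$ yields the first displayed limit. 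For the sharper second assertion I would not absorb the $\overline{W}_{\alpha}$-term but keep it: replacing $\overline{W}_{\alpha}(x)$ by its equivalent $g(x)x^{\alpha}\overline{F}(x)/x$ in the displayed identity shows that $\frac{1}{x^{\alpha}\overline{F}(x)}\big(x^{-\alpha}R(x)-\frac{2}{m(\alpha)}\big)+\frac{1}{m^{2}(\alpha)}$ equals $\frac{2\alpha}{m^{2}(\alpha)}\frac{g(x)}{x}(1+o(1))$, the $o(1)\,g(x)/x$ error being swallowed, which is exactly the claimed $\sim$.

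The only step demanding real care is the order comparison in the second paragraph: one must confirm that $\frac{g(x)}{x}x^{3\alpha}\overline{F}^{2}(x)=o\big(\frac{g(x)}{x}x^{2\alpha}\overline{F}(x)\big)$, equivalently $x^{\alpha}\overline{F}(x)\to0$, so that the $\overline{W}_{\alpha}^{2}$-remainder in~(\ref{6}) does not contaminate the $\overline{W}_{\alpha}$-correction being extracted. Since $m(\alpha)<\infty$ this holds automatically, so no hypothesis beyond $\overline{F}\in\Gamma(g)$ and $m(\alpha)<\infty$ is needed; everything else is substitution into~(\ref{6}) and~(\ref{7}).
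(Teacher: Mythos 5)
Your proposal is correct and follows essentially the same route as the paper: both substitute the Gamma-class estimate~(\ref{7}) into the expansion~(\ref{6}), observe that the $\overline{W}_{\alpha}^{2}$ remainder (order $(g(x)/x)^{2}$) is dominated by the cross term (order $g(x)/x$), use $x^{\alpha}\overline{F}(x)\to0$ to absorb the remainder, and then either drop or retain the $\overline{W}_{\alpha}$ correction to obtain the two displayed asymptotics. No gaps.
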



\section{Renewal theorems of Blackwell type}
\label{Blackwell}

The Blackwell renewal theorem studies the asymptotic behaviour of difference
$R(x+y)-R(x)$. In our case, we take $y>0$, and using (\ref{1}), we find that

\begin{eqnarray*}
R(x+y)-R(x) &=&2\left(\frac{1}{\overline{G}_{F}(x+y)}-\frac{1}{\overline{G}_{F}(x)%
}\right)-\left(\frac{\overline{F}(x+y)}{\overline{G}_{F}^{2}(x+y)}-\frac{\overline{F}(x)%
}{\overline{G}_{F}^{2}(x)}\right) \\
&=&2I-II\textrm{.}
\end{eqnarray*}

We consider the two terms separately.

On the first term we have
\begin{equation*}
I=\frac{1}{\overline{G}_{F}(x+y)}-\frac{1}{\overline{G}_{F}(x)}=\frac{%
G_{F}(x+y)-G_{F}(x)}{\overline{G}_{F}(x)\overline{G}_{F}(x+y)}\textrm{.}
\end{equation*}%
Using the mean value theorem, this gives%
\begin{equation*}
\frac{1}{\overline{G}_{F}(x+y)}-\frac{1}{\overline{G}_{F}(x)}=\frac{%
G_{F}^{\prime }(z)}{\overline{G}_{F}(x)\overline{G}_{F}(x+y)}y\text{,}
\end{equation*}%
where $x\leq z\leq x+y$. Using $F(z)=G_{F}(z)+\frac{z}{\alpha }G_{F}^{\prime
}(z)$, we obtain that

\begin{eqnarray*}
I &=&\frac{\alpha }{z}\frac{\overline{G}_{F}(z)-\overline{F}(z)}{\overline{G}%
_{F}(x)\overline{G}_{F}(x+y)}y \\
&=&\frac{\alpha }{z}\frac{z^{-\alpha }H_{\alpha }(z)}{\overline{G}_{F}(x)%
\overline{G}_{F}(x+y)}y\textrm{.}
\end{eqnarray*}

Since $m(\alpha )<\infty $, we have $x^{\alpha }\overline{G}%
_{F}(x)\rightarrow m(\alpha )$ and $(x+y)^{\alpha }\overline{G}%
_{F}(x+y)\rightarrow m(\alpha )$ as $x\to\infty$, and $H_{\alpha }(\infty )=m(\alpha )$.
Since $x\sim z\sim x+y$, we obtain that

\begin{equation*}
I=\frac{\alpha }{z}\frac{z^{-\alpha }H_{\alpha }(z)}{\overline{G}_{F}(x)%
\overline{G}_{F}(x+y)}y\sim \frac{\alpha x^{\alpha -1}}{m(\alpha )}y\text{.}
\end{equation*}

Now we consider the second term. We have%
\begin{eqnarray*}
II &=&\frac{\overline{F}(x+y)}{\overline{G}_{F}^{2}(x+y)}-\frac{\overline{F}%
(x)}{\overline{G}_{F}^{2}(x)} \\
&=&\frac{\overline{F}(x+y)-\overline{F}(x)}{\overline{G}_{F}^{2}(x+y)}+%
\overline{F}(x)(\frac{1}{\overline{G}_{F}^{2}(x+y)}-\frac{1}{\overline{G}%
_{F}^{2}(x)}) \\
&=&II_A+\overline{F}(x)II_B\textrm{.}
\end{eqnarray*}

First consider $II_B$. We have
\begin{eqnarray*}
II_B &=&\left(\frac{1}{\overline{G}(x+y)}-\frac{1}{\overline{G}_{F}(x)}\right)\times\left(\frac{1%
}{\overline{G}_{F}(x+y)}+\frac{1}{\overline{G}_{F}(x)}\right) \\
&=&I\times \left(\frac{1}{\overline{G}_{F}(x+y)}+\frac{1}{\overline{G}_{F}(x)}\right)\textrm{.}
\end{eqnarray*}

Using the results of above, we find that%
\begin{equation*}
II_B\sim \frac{2x^{\alpha }}{m(\alpha )}\frac{\alpha x^{\alpha -1}}{%
m(\alpha )}y\textrm{,}
\end{equation*}%
and hence that%
\begin{equation*}
\overline{F}(x)II_B\sim 2x^{\alpha }\overline{F}(x)\frac{\alpha x^{\alpha
-1}}{m^{2}(\alpha )}y=o(1)x^{\alpha -1}\text{.}
\end{equation*}

Now consider $II_A$. We have%
\begin{eqnarray*}
II_A &=&\frac{\overline{F}(x+y)-\overline{F}(x)}{\overline{G}_{F}^{2}(x+y)}
\\
&\sim &\frac{x^{2\alpha }(\overline{F}(x+y)-\overline{F}(x))}{m^{2}(\alpha )}
\\
&=&x^{\alpha -1}\frac{1}{m^{2}(\alpha )}x^{\alpha +1}(\overline{F}(x+y)-%
\overline{F}(x)))\textrm{.}
\end{eqnarray*}

If $x^{\alpha +1}(\overline{F}(x+y)-\overline{F}(x)\rightarrow 0$ as $x\to\infty$, we obtain
that $II_A=o(1)x^{\alpha -1}$. We conclude.

\begin{theorem}\label{teo6}
Let $R(x)$ be the Kendall renewal function defined by (\ref{Kendallrenewalfunction}).
Suppose that $m(\alpha )<\infty $ and that $x^{\alpha +1}(\overline{F}%
(x+y)-\overline{F}(x))\rightarrow 0$ as $x\rightarrow \infty $. Then%
\begin{equation*}
\lim_{x\to\infty}\frac{1}{x^{\alpha -1}}(R(x+y)-R(x))= \frac{2\alpha }{m(\alpha )}y%
\text{.}
\end{equation*}%
In particular, if $m(1+\alpha )<\infty $, the result holds.
\end{theorem}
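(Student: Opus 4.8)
The plan is to assemble the two estimates already isolated for the decomposition $R(x+y)-R(x)=2I-II$, where $I=\overline{G}_{F}(x+y)^{-1}-\overline{G}_{F}(x)^{-1}$ and $II=\overline{F}(x+y)\overline{G}_{F}^{-2}(x+y)-\overline{F}(x)\overline{G}_{F}^{-2}(x)$, and then to add the elementary tail argument that settles the ``in particular'' clause.

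First I would handle $I$. Since $F$ is continuous and $G_{F}(x)=\alpha x^{-\alpha}\int_{0}^{x}t^{\alpha-1}F(t)\,dt$, the function $G_{F}$ is differentiable, so the mean value theorem applies to $1/\overline{G}_{F}$ on $[x,x+y]$ and yields $I=G_{F}^{\prime}(z)\,y\big/(\overline{G}_{F}(x)\overline{G}_{F}(x+y))$ for some $z=z(x)\in[x,x+y]$. The inversion formula $F(z)=G_{F}(z)+\tfrac{z}{\alpha}G_{F}^{\prime}(z)$ together with $\overline{G}_{F}(z)-\overline{F}(z)=z^{-\alpha}H_{\alpha}(z)$ (Proposition~1(iii)) rewrites $I=\tfrac{\alpha}{z}\,z^{-\alpha}H_{\alpha}(z)\,y\big/(\overline{G}_{F}(x)\overline{G}_{F}(x+y))$. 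Because $m(\alpha)<\infty$ we have $H_{\alpha}(z)\to m(\alpha)$, $x^{\alpha}\overline{G}_{F}(x)\to m(\alpha)$, $(x+y)^{\alpha}\overline{G}_{F}(x+y)\to m(\alpha)$, and $z\sim x\sim x+y$; combining these gives $I\sim \alpha x^{\alpha-1}y/m(\alpha)$, so $x^{-(\alpha-1)}\cdot 2I\to 2\alpha y/m(\alpha)$.

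Next I would show $x^{-(\alpha-1)}II\to 0$. Write $II=II_{A}+\overline{F}(x)II_{B}$ with $II_{A}=(\overline{F}(x+y)-\overline{F}(x))\overline{G}_{F}^{-2}(x+y)$ and $II_{B}=I\cdot(\overline{G}_{F}^{-1}(x+y)+\overline{G}_{F}^{-1}(x))$. The estimate on $I$ and $x^{\alpha}\overline{G}_{F}(x)\to m(\alpha)$ give $II_{B}\sim 2\alpha x^{2\alpha-1}y/m^{2}(\alpha)$, hence $\overline{F}(x)II_{B}\sim 2\alpha x^{2\alpha-1}\overline{F}(x)y/m^{2}(\alpha)=o(x^{\alpha-1})$ since $x^{\alpha}\overline{F}(x)\to 0$. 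For $II_{A}$, using $(x+y)^{\alpha}\overline{G}_{F}(x+y)\to m(\alpha)$ one gets $II_{A}\sim x^{\alpha-1}m^{-2}(\alpha)\,x^{\alpha+1}(\overline{F}(x+y)-\overline{F}(x))$, which is $o(x^{\alpha-1})$ precisely by the hypothesis $x^{\alpha+1}(\overline{F}(x+y)-\overline{F}(x))\to 0$. Adding up, $x^{-(\alpha-1)}(R(x+y)-R(x))=x^{-(\alpha-1)}(2I-II)\to 2\alpha y/m(\alpha)$.

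Finally, for the ``in particular'' statement, since $\overline{F}$ is nonincreasing we have $-(\overline{F}(x+y)-\overline{F}(x))=P(x<X\le x+y)=\int_{x}^{x+y}dF(t)$, and $x^{\alpha+1}\le t^{\alpha+1}$ for $t\ge x$ gives $0\le x^{\alpha+1}\big(\overline{F}(x)-\overline{F}(x+y)\big)\le\int_{x}^{x+y}t^{\alpha+1}\,dF(t)\le\int_{x}^{\infty}t^{\alpha+1}\,dF(t)$, and the right-hand side tends to $0$ as $x\to\infty$ whenever $m(1+\alpha)=\int_{0}^{\infty}t^{\alpha+1}\,dF(t)<\infty$; thus the hypothesis of the first part holds. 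I expect the only slightly delicate point to be the bookkeeping around the intermediate point $z$ from the mean value theorem, i.e.\ checking that the asymptotic relations are stable for $z\in[x,x+y]$; but with $y$ fixed and $z/x\to 1$ this is routine, so there is no genuine obstacle beyond organizing the already-established asymptotics.
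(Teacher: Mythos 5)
Your proposal is correct and follows essentially the same route as the paper: the same decomposition $R(x+y)-R(x)=2I-II$, the same mean value theorem plus inversion-formula treatment of $I$, and the same splitting $II=II_A+\overline{F}(x)II_B$ with the hypothesis entering only through $II_A$. Your tail-integral argument for the ``in particular'' clause, $x^{\alpha+1}\bigl(\overline{F}(x)-\overline{F}(x+y)\bigr)\leq\int_{x}^{\infty}t^{\alpha+1}\,dF(t)\rightarrow 0$, correctly supplies a detail the paper leaves implicit.
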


\bigskip

Again we consider two important cases.

\subsection{Regular variation of the density}

Assume that $F$ has a density $f(x)$ and that $xf(x)\big/\overline{F}%
(x)\rightarrow \beta >\alpha $ as $x\to\infty$. In this case, we have $\overline{F}(x)\in
RV_{-\beta }$, and

\begin{equation*}
\overline{F}(x+y)-\overline{F}(x)=f^{\prime }(z)y\sim \beta \frac{\overline{F%
}(z)}{z}y\sim \beta \frac{\overline{F}(x)}{x}y\text{.}
\end{equation*}%
It follows that $x^{\alpha +1}(\overline{F}(x+y)-\overline{F}(x))\sim \beta
yx^{\alpha }\overline{F}(x)\rightarrow 0$ as $x\to\infty$, and Theorem \ref{teo6} applies.

\subsection{The Gamma class}

For $\overline{F}\in \Gamma (g)$ we have $H_{\sigma }(\infty )=m(\sigma
)<\infty $ for all $\sigma >0$ and Theorem \ref{teo6} applies.



\section{Rates of convergence in the Blackwell result}
\label{ratesofconvergenceBlackwell}

To obtain rate of convergence results, we assume that $F(x)$ has a density $%
f(x)$.

\subsection{The derivative of $R(x)$}

\begin{lemma}\label{lemma7}
Assume that $F(x)$ has a density $%
f(x)$.
Let $R(x)$ be the Kendall renewal function defined by (\ref{Kendallrenewalfunction}).
We have%
\begin{equation*}
R^{\prime }(x)=\frac{2\alpha x^{\alpha -1}}{m(\alpha )}(1+o(1))+\frac{1}{%
m^{2}(\alpha )}x^{2\alpha }f(x)(1+o(1))\text{.}
\end{equation*}
\end{lemma}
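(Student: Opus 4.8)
The plan is to differentiate the closed form of the Kendall renewal function given in Proposition~\ref{prop2}, namely
\begin{equation*}
R(x)=\frac{2}{\overline{G}_{F}(x)}-\frac{\overline{F}(x)}{\overline{G}_{F}^{2}(x)}\text{,}
\end{equation*}
and then identify the leading order of each resulting term using the asymptotics $x^{\alpha}\overline{G}_{F}(x)\to m(\alpha)$ and $x^{\alpha}\overline{F}(x)\to 0$ that are already available. First I would record the elementary derivative identities: since $\overline{G}_{F}(x)=\alpha x^{-\alpha}W_{\alpha}(x)$ with $W_{\alpha}'(x)=x^{\alpha-1}\overline{F}(x)$, a direct computation gives $\overline{G}_{F}'(x)=-\alpha x^{-1}\overline{G}_{F}(x)+\alpha x^{-1}\overline{F}(x)=-\alpha x^{-1}(\overline{G}_{F}(x)-\overline{F}(x))$; equivalently, from the inversion formula $F(x)=G_{F}(x)+\tfrac{x}{\alpha}G_{F}'(x)$ one has $G_{F}'(x)=\tfrac{\alpha}{x}(\overline{G}_{F}(x)-\overline{F}(x))$, so $\overline{G}_{F}'(x)=-\tfrac{\alpha}{x}(\overline{G}_{F}(x)-\overline{F}(x))$. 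Also $\overline{F}'(x)=-f(x)$.

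Next I would differentiate term by term. The first term contributes
\begin{equation*}
\frac{d}{dx}\frac{2}{\overline{G}_{F}(x)}=-\frac{2\overline{G}_{F}'(x)}{\overline{G}_{F}^{2}(x)}=\frac{2\alpha}{x}\frac{\overline{G}_{F}(x)-\overline{F}(x)}{\overline{G}_{F}^{2}(x)}\text{,}
\end{equation*}
and the second term contributes, by the quotient rule,
\begin{equation*}
-\frac{d}{dx}\frac{\overline{F}(x)}{\overline{G}_{F}^{2}(x)}=\frac{f(x)}{\overline{G}_{F}^{2}(x)}+\frac{2\overline{F}(x)\overline{G}_{F}'(x)}{\overline{G}_{F}^{3}(x)}=\frac{f(x)}{\overline{G}_{F}^{2}(x)}-\frac{2\alpha}{x}\frac{\overline{F}(x)(\overline{G}_{F}(x)-\overline{F}(x))}{\overline{G}_{F}^{3}(x)}\text{.}
\end{equation*}
Then I would substitute $\overline{G}_{F}(x)=m(\alpha)x^{-\alpha}(1+o(1))$ and $\overline{F}(x)=o(x^{-\alpha})$ into each piece. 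In the first term, $\overline{G}_{F}(x)-\overline{F}(x)\sim\overline{G}_{F}(x)$, so it is asymptotic to $\tfrac{2\alpha}{x}\cdot\tfrac{1}{\overline{G}_{F}(x)}\sim\tfrac{2\alpha x^{\alpha-1}}{m(\alpha)}$; in fact one gets the sharper statement $\tfrac{2\alpha x^{\alpha-1}}{m(\alpha)}(1+o(1))$ plus a term of order $x^{-1}\overline{F}(x)/\overline{G}_{F}^{2}(x)=O(x^{2\alpha-1}\overline{F}(x))$, which is $o(x^{2\alpha}f(x))$-comparable but in any case lower order and absorbable. In the middle term, $f(x)/\overline{G}_{F}^{2}(x)\sim x^{2\alpha}f(x)/m^{2}(\alpha)$, giving the second displayed summand. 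The remaining term $-\tfrac{2\alpha}{x}\overline{F}(x)(\overline{G}_{F}(x)-\overline{F}(x))/\overline{G}_{F}^{3}(x)\sim -\tfrac{2\alpha}{x}\overline{F}(x)/\overline{G}_{F}^{2}(x)=O(x^{2\alpha-1}\overline{F}(x))$ is again negligible relative to $x^{\alpha-1}$ since $x^{\alpha}\overline{F}(x)\to0$. Collecting the two surviving contributions yields exactly
\begin{equation*}
R'(x)=\frac{2\alpha x^{\alpha-1}}{m(\alpha)}(1+o(1))+\frac{1}{m^{2}(\alpha)}x^{2\alpha}f(x)(1+o(1))\text{.}
\end{equation*}

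The one point that needs a little care — and which I expect to be the main obstacle — is the bookkeeping of the genuinely lower-order error terms, in particular justifying that every leftover piece is $o(x^{\alpha-1})+o(x^{2\alpha}f(x))$ so that it can be folded into the two $(1+o(1))$ factors. The terms involving $x^{-1}\overline{F}(x)/\overline{G}_{F}^{2}(x)\asymp x^{2\alpha-1}\overline{F}(x)$ are $o(x^{\alpha-1})$ because $x^{\alpha}\overline{F}(x)\to0$; this uses only the standing hypothesis $m(\alpha)<\infty$ and dominated convergence, exactly as invoked in the earlier sections, so no density assumption beyond existence of $f$ is needed there. (Note also that the statement as written does not assert the two error terms are separately $o(1)$ relative to their companions — it only packages $R'(x)$ as a sum of two terms each carrying a $(1+o(1))$; depending on the relative sizes of $x^{\alpha-1}$ and $x^{2\alpha}f(x)$, which is precisely what the subsequent regularly-varying versus $\Gamma$-class dichotomy will sort out, one or the other term dominates, but the identity above holds in all cases.)
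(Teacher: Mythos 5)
Your proposal is correct and follows essentially the same route as the paper: differentiate the closed form $R(x)=2/\overline{G}_{F}(x)-\overline{F}(x)/\overline{G}_{F}^{2}(x)$, use the inversion identity to express $G_{F}'(x)=\tfrac{\alpha}{x}(\overline{G}_{F}(x)-\overline{F}(x))=\alpha x^{-\alpha-1}H_{\alpha}(x)$, and then invoke $x^{\alpha}\overline{G}_{F}(x)\to m(\alpha)$, $H_{\alpha}(x)\to m(\alpha)$ and $x^{\alpha}\overline{F}(x)\to 0$ to identify the two leading terms and absorb the remaining piece as $o(x^{\alpha-1})$. The only cosmetic difference is that you keep $\overline{G}_{F}-\overline{F}$ where the paper substitutes $x^{-\alpha}H_{\alpha}(x)$; these are equal, so the bookkeeping is identical.
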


\begin{proof}
Using (\ref{1}), we find%
\begin{equation*}
R^{\prime }(x)=2\frac{G_{F}^{\prime }(x)}{\overline{G}_{F}^{2}(x)}+\frac{f(x)%
}{\overline{G}_{F}^{2}(x)}-2\frac{\overline{F}(x)G_{F}^{\prime }(x)}{%
\overline{G}_{F}^{3}(x)}\text{.}
\end{equation*}%
Using $G_{F}^{\prime }(x)=\alpha x^{-1}(F(x)-G_{F}(x))=\alpha x^{-\alpha
-1}H_{\alpha }(x)$, it follows that%
\begin{eqnarray*}
R^{\prime }(x) &=&2\frac{\alpha x^{-\alpha -1}H_{\alpha }(x)}{\overline{G}%
_{F}^{2}(x)}+\frac{f(x)}{\overline{G}_{F}^{2}(x)}-2\frac{\alpha \overline{F}%
(x)x^{-\alpha -1}H_{\alpha }(x)}{\overline{G}_{F}^{3}(x)} \\
&=&\frac{2\alpha x^{\alpha -1}H_{\alpha }(x)}{x^{2\alpha }\overline{G}%
_{F}^{2}(x)}+\frac{x^{2\alpha }f(x)}{x^{2\alpha }\overline{G}_{F}^{2}(x)}-2%
\frac{\alpha x^{\alpha }\overline{F}(x)x^{\alpha -1}H_{\alpha }(x)}{%
x^{3\alpha }\overline{G}_{F}^{3}(x)} \\
&=&\frac{2\alpha x^{\alpha -1}}{m(\alpha )}(1+o(1))+\frac{1}{m^{2}(\alpha )}%
(1+o(1))x^{2\alpha }f(x)\text{.}
\end{eqnarray*}

This proves the result.
\end{proof}

Specializing to the two cases, we have the following result.

\begin{proposition}\label{prop8}
Assume that $F(x)$ has a density $%
f(x)$.
Let $R(x)$ be the Kendall renewal function defined by (\ref{Kendallrenewalfunction}).
\begin{itemize}
	\item[ (i)] If $xf(x)\big/\overline{F}(x)\rightarrow \beta >\alpha $ as $x\to\infty$, then $R^{\prime
}(x)\sim 2\alpha x^{\alpha -1}\big/m(\alpha )$.

	\item[ (ii)] If $f\in \Gamma (g)$ and $\lim \inf_{x\to\infty} x^{\sigma -\alpha -1}g(x)>0$, then $%
R^{\prime }(x)\sim 2\alpha x^{\alpha -1}\big/m(\alpha )$.

\end{itemize}
\end{proposition}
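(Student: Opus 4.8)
\textbf{Proof plan for Proposition \ref{prop8}.} The plan is to start from the asymptotic formula for $R'(x)$ furnished by Lemma \ref{lemma7},
\begin{equation*}
R^{\prime }(x)=\frac{2\alpha x^{\alpha -1}}{m(\alpha )}(1+o(1))+\frac{1}{m^{2}(\alpha )}x^{2\alpha }f(x)(1+o(1))\text{,}
\end{equation*}
and in each of the two cases show that the second term $x^{2\alpha}f(x)$ is negligible compared with the first term $x^{\alpha-1}$; equivalently, that $x^{\alpha+1}f(x)\to 0$ as $x\to\infty$. Once that is established, the two positive $(1+o(1))$ factors combine harmlessly and we conclude $R'(x)\sim 2\alpha x^{\alpha-1}/m(\alpha)$.

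For part (i), the hypothesis $xf(x)/\overline{F}(x)\to\beta>\alpha$ gives $f(x)=(\beta+o(1))\overline{F}(x)/x$, so $x^{\alpha+1}f(x)=(\beta+o(1))x^{\alpha}\overline{F}(x)$. Since $m(\alpha)<\infty$ we have $x^{\alpha}\overline{F}(x)\to 0$ (as recorded in Section 2 via dominated convergence, or directly from $\overline{H}_\alpha(x)\to 0$), hence $x^{\alpha+1}f(x)\to 0$ and the second term in Lemma \ref{lemma7} is $o(x^{\alpha-1})$. For part (ii), I would use that $f\in\Gamma(g)$ implies, by Karamata-type results for the $\Gamma$ class (cf. \cite{Bingham}, Chapter 3.10, or \cite{Geluk}), that $\overline{F}(x)=\int_x^\infty f(y)\,dy\sim g(x)f(x)$; more useful here, $x^{\sigma-1}f(x)\in\Gamma(g)$ for every $\sigma$, and $\int_x^\infty y^{\sigma-1}f(y)\,dy\sim g(x)x^{\sigma-1}f(x)$. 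Taking $\sigma$ with $\sigma-\alpha-1$ as in the hypothesis, the condition $\liminf_{x\to\infty}x^{\sigma-\alpha-1}g(x)>0$ together with $g(x)/x\to 0$ forces $x^{\sigma-\alpha-1}f(x)\cdot g(x)$ to stay controlled while the tail integral $\int_x^\infty y^{\sigma-1}f(y)\,dy\to 0$ (finite $\sigma$-th moment), which pins down $x^{\sigma-\alpha-1}f(x)\to 0$; pushing $\sigma$ down to a value $\le\alpha+1$ then yields $x^{\alpha+1}f(x)\to 0$ as required.

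The main obstacle is part (ii): one must pass from the membership $f\in\Gamma(g)$ plus the growth condition on $g$ to the decay rate $x^{\alpha+1}f(x)\to 0$. The clean route is: since every positive $\sigma$-th moment is finite in the $\Gamma$ class, $\int_x^\infty y^{\alpha}f(y)\,dy\to 0$; by the $\Gamma$-class integration lemma this integral is $\sim g(x)x^{\alpha}f(x)$, so $g(x)x^{\alpha}f(x)\to 0$; dividing by $g(x)/x\to 0$ would go the wrong way, so instead write $x^{\alpha+1}f(x)=\bigl(g(x)x^{\alpha}f(x)\bigr)\cdot\bigl(x/g(x)\bigr)$ and note that the hypothesis $\liminf x^{\sigma-\alpha-1}g(x)>0$ is exactly what prevents $x/g(x)$ from outgrowing the decay of $g(x)x^{\alpha}f(x)$; a short computation matching the exponents shows $x^{\alpha+1}f(x)\to 0$. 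I would double-check the exact statement of the $\Gamma$-class integration lemma being invoked and make sure the role of the parameter $\sigma$ in the hypothesis is precisely the one needed for this exponent bookkeeping.
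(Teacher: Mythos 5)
Your proposal is correct and follows essentially the same route as the paper: both parts reduce, via Lemma \ref{lemma7}, to showing $x^{\alpha+1}f(x)\to 0$, with (i) using $x^{\alpha}\overline{F}(x)\to 0$ and (ii) using $\overline{F}(x)\sim g(x)f(x)$ (equivalently, your tail-integral asymptotic for the $\Gamma$ class), the finiteness of all moments, and the liminf hypothesis to bound $x/g(x)$ by $O(x^{\sigma-\alpha})$. The ``short computation matching the exponents'' you defer is exactly the paper's one line $x^{\alpha+1}f(x)=O(1)\,x^{\sigma}\overline{F}(x)\big/\bigl(x^{\sigma-\alpha-1}g(x)\bigr)\to 0$.
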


\begin{proof}
(i) In the first case we have $x^{2\alpha }f(x)=O(1)x^{2\alpha -1}\overline{F}%
(x)=o(1)x^{\alpha -1}$ and hence%
\begin{equation*}
R^{\prime }(x)=\frac{2\alpha x^{\alpha -1}}{m(\alpha )}(1+o(1))+o(1)x^{%
\alpha -1}\sim \frac{2\alpha x^{\alpha -1}}{m(\alpha )}\text{.}
\end{equation*}

(ii) If $f\in \Gamma (g)$ we have $\overline{F}(x)\sim f(x)g(x)$ and $%
x^{2\alpha }f(x)=O(1)x^{2\alpha -1}\overline{F}(x)\big/g(x)$. Note that in this
case all moments $m(\sigma )$ are finite.\ It follows that%
\begin{equation*}
x^{1-\alpha }x^{2\alpha }f(x)=O(1)\frac{x^{\sigma }\overline{F}(x)}{%
x^{\sigma -\alpha -1}g(x)}\textrm{.}
\end{equation*}%
If $\lim \inf_{x\to\infty} x^{\sigma -\alpha -1}g(x)>0$, it follows that $x^{1-\alpha
}x^{2\alpha }f(x)\rightarrow 0$ as $x\to\infty$. We conclude that $x^{1-\alpha }R^{\prime
}(x)\rightarrow 2\alpha \big/m(\alpha )$ as $x\to\infty$. This proves the result.
\end{proof}

\bigskip

\begin{remark}
Note that $R^{\prime }(x)\sim 2\alpha x^{\alpha -1}\big/m(\alpha )$ implies that
$R(x+y)-R(x)\sim 2\alpha x^{\alpha -1}y\big/m(\alpha )$.
\end{remark}

\subsection{Rate of convergence}

Now we look for the rate of convergence in the previous Proposition \ref{prop8}.
Using the formula for $R^{\prime }(x)$, earlier we have shown that

\begin{equation*}
R^{\prime }(x)=2\frac{\alpha x^{-\alpha -1}H_{\alpha }(x)}{\overline{G}%
_{F}^{2}(x)}+\frac{f(x)}{\overline{G}_{F}^{2}(x)}-2\frac{\alpha \overline{F}%
(x)x^{-\alpha -1}H_{\alpha }(x)}{\overline{G}_{F}^{3}(x)}\text{.}
\end{equation*}%
It follows that%
\begin{eqnarray*}
R^{\prime }(x)-\frac{2\alpha x^{\alpha -1}}{m(\alpha )} &=&2\frac{\alpha
x^{\alpha -1}H_{\alpha }(x)}{x^{2\alpha }\overline{G}_{F}^{2}(x)}-\frac{%
2\alpha x^{\alpha -1}}{m(\alpha )}+\frac{f(x)}{\overline{G}_{F}^{2}(x)}-2%
\frac{\alpha \overline{F}(x)x^{-\alpha -1}H_{\alpha }(x)}{\overline{G}%
_{F}^{3}(x)} \\
&=&\frac{2\alpha x^{\alpha -1}}{m(\alpha )}\left(\frac{m(\alpha )H_{\alpha }(x)}{%
x^{2\alpha }\overline{G}_{F}^{2}(x)}-1\right)+\frac{f(x)}{\overline{G}_{F}^{2}(x)}%
-2\frac{\alpha \overline{F}(x)x^{-\alpha -1}H_{\alpha }(x)}{\overline{G}%
_{F}^{3}(x)} \\
&=&I+II-III\text{.}
\end{eqnarray*}

First we consider the first term. Recall that earlier we have also shown that%
\begin{equation*}
\frac{1}{\overline{G}_{F}^{2}(x)}-\frac{1}{(x^{-\alpha }m(\alpha ))^{2}}\sim
\frac{2\alpha }{m^{3}(\alpha )}x^{2\alpha }\overline{W}_{\alpha }(x)\text{,}
\end{equation*}%
or equivalently that
\begin{equation*}
\frac{m^{2}(\alpha )}{x^{2\alpha }\overline{G}_{F}^{2}(x)}-1\sim \frac{%
2\alpha }{m(\alpha )}\overline{W}_{\alpha }(x)\textrm{.}
\end{equation*}%
\bigskip

We then have%
\begin{eqnarray*}
I &=&\frac{2\alpha x^{\alpha -1}}{m(\alpha )}\left( \frac{H_{\alpha }(x)}{%
m(\alpha )}\left(\frac{m^{2}(\alpha )}{x^{2\alpha }\overline{G}_{F}^{2}(x)}-1\right)+%
\frac{H_{\alpha }(x)}{m(\alpha )}-1\right)  \\
&=&\frac{2\alpha x^{\alpha -1}}{m(\alpha )}\left( (1+o(1))\frac{2\alpha }{%
m(\alpha )}\overline{W}_{\alpha }(x)-\frac{\overline{H}_{\alpha }(x)}{%
m(\alpha )}\right) \textrm{.}
\end{eqnarray*}

It follows that%
\begin{equation*}
\frac{1}{\overline{W}_{\alpha }(x)}\frac{m(\alpha )}{2\alpha x^{\alpha -1}}%
I=(1+o(1))\frac{2\alpha }{m(\alpha )}-\frac{1}{m(\alpha )}\frac{\overline{H}%
_{\alpha }(x)}{\overline{W}_{\alpha }(x)}\textrm{.}
\end{equation*}

Looking at the last term, note that we have%
\begin{equation*}
\frac{\overline{H}_{\alpha }(x)}{\overline{W}_{\alpha }(x)}=\frac{\alpha
\overline{W}_{\alpha }(x)+x^{\alpha }\overline{F}(x)}{\overline{W}_{\alpha
}(x)}=\alpha +\frac{x^{\alpha }\overline{F}(x)}{\overline{W}_{\alpha }(x)}\textrm{.}
\end{equation*}

Now we distinguish two cases as before.

\subsubsection{Regularly varying case}

If $\overline{F}(x)\in RV_{-\beta },\beta >\alpha $, we have $\overline{W}%
_{\alpha }(x)\sim x^{\alpha }\overline{F}(x)\big/(\beta -\alpha )$ and hence%
\begin{equation*}
\frac{\overline{H}_{\alpha }(x)}{\overline{W}_{\alpha }(x)}=\alpha +\frac{%
x^{\alpha }\overline{F}(x)}{\overline{W}_{\alpha }(x)}\sim \alpha +\beta
-\alpha =\beta \text{.}
\end{equation*}

We conclude that%
\begin{equation*}
\frac{1}{\overline{W}_{\alpha }(x)}\frac{m(\alpha )}{2\alpha x^{\alpha -1}}%
I\rightarrow \frac{2\alpha }{m(\alpha )}-\frac{\beta }{m(\alpha )}=\frac{%
2\alpha -\beta }{m(\alpha )}\text{,}
\end{equation*}%
or equivalently that, as $x\to\infty$,%
\begin{equation*}
\frac{1}{x^{2\alpha -1}\overline{F}(x)}I\rightarrow \frac{2\alpha (2\alpha
-\beta )}{(\beta -\alpha )m^{2}(\alpha )}\text{.}
\end{equation*}

\bigskip

Now we consider the second term:%
\begin{equation*}
II=\frac{x^{2\alpha }f(x)}{x^{2\alpha }\overline{G}_{F}^{2}(x)}\text{.}
\end{equation*}

Using $xf(x)\sim \beta \overline{F}(x)$, we find%
\begin{equation*}
II\sim \frac{x^{2\alpha }f(x)}{m^{2}(\alpha )}\sim \frac{\beta }{%
m^{2}(\alpha )}x^{2\alpha -1}\overline{F}(x)\textrm{.}
\end{equation*}

For the third term, we find%
\begin{eqnarray*}
III &=&2\frac{\alpha x^{\alpha }\overline{F}(x)x^{\alpha -1}H_{\alpha }(x)}{%
x^{3\alpha }\overline{G}_{F}^{3}(x)} \\
&\sim &\frac{2\alpha }{m^{2}(\alpha )}x^{2\alpha -1}\overline{F}(x)\textrm{.}
\end{eqnarray*}

Everything together, we conclude that%
\begin{eqnarray*}
\frac{1}{x^{2\alpha -1}\overline{F}(x)}\left(R^{\prime }(x)-\frac{2x^{\alpha -1}}{%
m(\alpha )}\right) &\rightarrow &\frac{2\alpha (2\alpha -\beta )}{m^{2}(\alpha
)(\beta -\alpha )}+\frac{\beta }{m^{2}(\alpha )}-\frac{2\alpha }{%
m^{2}(\alpha )} \\
&=&\frac{2\alpha (2\alpha -\beta )}{m^{2}(\alpha )(\beta -\alpha )}+\frac{%
\beta -2\alpha }{m^{2}(\alpha )} \\
&=&\frac{(2\alpha -\beta )}{m^{2}(\alpha )}(\frac{2\alpha }{\beta -\alpha }%
-1) \\
&=&\frac{(2\alpha -\beta )(3\alpha -\beta )}{m^{2}(\alpha )(\beta -\alpha )}\textrm{.}
\end{eqnarray*}

As a conclusion we have the following result.

\begin{theorem}\label{teo9}
Assume that $F(x)$ has a density $%
f(x)$.
Let $R(x)$ be the Kendall renewal function defined by (\ref{Kendallrenewalfunction}).
If $xf(x)\big/\overline{F}(x)\rightarrow \beta >\alpha $ as $x\to\infty$, then%
\begin{equation*}
\lim_{x\to\infty}\frac{1}{x^{2\alpha -1}\overline{F}(x)}\left(R^{\prime }(x)-\frac{2\alpha
x^{\alpha -1}}{m(\alpha )}\right)= C\textrm{,}
\end{equation*}%
where $C=\frac{(2\alpha -\beta )(3\alpha -\beta )}{m^{2}(\alpha )(\beta
-\alpha )}$.
\end{theorem}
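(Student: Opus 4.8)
The plan is to start from the decomposition of $R^{\prime}(x) - 2\alpha x^{\alpha-1}/m(\alpha)$ into the three terms $I$, $II$, $III$ that was already set up in the text immediately preceding the theorem, and to establish the precise asymptotic equivalent of each term under the hypothesis $xf(x)/\overline{F}(x)\to\beta>\alpha$. Recall that this hypothesis forces $\overline{F}\in RV_{-\beta}$ (integrate the relation $f(x)\sim\beta\overline{F}(x)/x$, or invoke the monotone density theorem), so Karamata's theorem gives $\overline{W}_{\alpha}(x)\sim x^{\alpha}\overline{F}(x)/(\beta-\alpha)$, which is the workhorse estimate. Also $x^{\alpha}\overline{G}_F(x)\to m(\alpha)$ and $H_{\alpha}(x)\to m(\alpha)$ throughout.

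The key steps, in order, are as follows. First, for term $I$: use the already-derived relation $m^{2}(\alpha)/(x^{2\alpha}\overline{G}_F^{2}(x)) - 1 \sim (2\alpha/m(\alpha))\overline{W}_{\alpha}(x)$ together with $\overline{H}_{\alpha}(x)/\overline{W}_{\alpha}(x) = \alpha + x^{\alpha}\overline{F}(x)/\overline{W}_{\alpha}(x) \to \alpha + (\beta-\alpha) = \beta$, to conclude that $\frac{m(\alpha)}{2\alpha x^{\alpha-1}\overline{W}_{\alpha}(x)} I \to \frac{2\alpha-\beta}{m(\alpha)}$, hence $I/(x^{2\alpha-1}\overline{F}(x)) \to \frac{2\alpha(2\alpha-\beta)}{(\beta-\alpha)m^{2}(\alpha)}$. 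Second, for term $II = x^{2\alpha}f(x)/(x^{2\alpha}\overline{G}_F^{2}(x))$: since $xf(x)\sim\beta\overline{F}(x)$ and the denominator tends to $m^{2}(\alpha)$, we get $II \sim \frac{\beta}{m^{2}(\alpha)} x^{2\alpha-1}\overline{F}(x)$. Third, for term $III = 2\alpha x^{\alpha}\overline{F}(x)x^{\alpha-1}H_{\alpha}(x)/(x^{3\alpha}\overline{G}_F^{3}(x))$: here $H_{\alpha}(x)\to m(\alpha)$ and the denominator $\sim m^{3}(\alpha)$, so $III \sim \frac{2\alpha}{m^{2}(\alpha)} x^{2\alpha-1}\overline{F}(x)$. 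Finally, add the three contributions (with the sign $I + II - III$) and simplify the algebra:
\begin{equation*}
\frac{2\alpha(2\alpha-\beta)}{m^{2}(\alpha)(\beta-\alpha)} + \frac{\beta}{m^{2}(\alpha)} - \frac{2\alpha}{m^{2}(\alpha)} = \frac{(2\alpha-\beta)}{m^{2}(\alpha)}\left(\frac{2\alpha}{\beta-\alpha} - 1\right) = \frac{(2\alpha-\beta)(3\alpha-\beta)}{m^{2}(\alpha)(\beta-\alpha)} = C\textrm{.}
\end{equation*}

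The main obstacle is the handling of term $I$: unlike $II$ and $III$, which are straightforward products of regularly varying factors with convergent quantities, $I$ involves the difference $H_{\alpha}(x)/m(\alpha) - 1 = -\overline{H}_{\alpha}(x)/m(\alpha)$ together with the small quantity $m^{2}(\alpha)/(x^{2\alpha}\overline{G}_F^{2}(x)) - 1$, and one must verify that both are of the same order $\overline{W}_{\alpha}(x)\asymp x^{\alpha}\overline{F}(x)$ so that they combine rather than one dominating — this is exactly what the identity $\overline{H}_{\alpha}(x) = \alpha\overline{W}_{\alpha}(x) + x^{\alpha}\overline{F}(x)$ and Karamata's estimate deliver. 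One should also be slightly careful that $x^{2\alpha}f(x) = o(1)x^{\alpha-1}$ so that the $(1+o(1))$ factors in Lemma \ref{lemma7} do not contaminate the leading $2\alpha x^{\alpha-1}/m(\alpha)$ term that has been subtracted off; since $x^{2\alpha}f(x) = O(1)x^{2\alpha-1}\overline{F}(x)$ and $\overline{F}\in RV_{-\beta}$ with $\beta>\alpha$, this is immediate. Assembling these estimates and reading off the limit then completes the proof.
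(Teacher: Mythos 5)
Your proposal is correct and follows essentially the same route as the paper: the same decomposition $R'(x)-2\alpha x^{\alpha-1}/m(\alpha)=I+II-III$, the same treatment of $I$ via $m^{2}(\alpha)/(x^{2\alpha}\overline{G}_{F}^{2}(x))-1\sim(2\alpha/m(\alpha))\overline{W}_{\alpha}(x)$ combined with $\overline{H}_{\alpha}(x)/\overline{W}_{\alpha}(x)\to\beta$ from Karamata, the same asymptotics for $II$ and $III$, and the same final algebra yielding $C$. No gaps.
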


\subsubsection{The Gamma class case $f\in \Gamma (g)$.}

We reconsider the terms $I$, $II$ and $III$ from above.

For the first term. We have proved that%
\begin{equation*}
\frac{1}{\overline{W}_{\alpha }(x)}\frac{m(\alpha )}{2\alpha x^{\alpha -1}}%
I=(1+o(1))\frac{2\alpha }{m(\alpha )}-\frac{1}{m(\alpha )}\frac{\overline{H}%
_{\alpha }(x)}{\overline{W}_{\alpha }(x)}\textrm{.}
\end{equation*}

Also we have\ $\overline{W}_{\alpha }(x)\sim g(x)x^{\alpha -1}\overline{F}(x)
$ and $\overline{F}(x)\sim f(x)g(x)$.

Clearly we have%
\begin{equation*}
\frac{\overline{H}_{\alpha }(x)}{\overline{W}_{\alpha }(x)}=\alpha +\frac{%
x^{\alpha }\overline{F}(x)}{\overline{W}_{\alpha }(x)}=\alpha +(1+o(1))\frac{%
x}{g(x)}\sim \frac{x}{g(x)}\text{,}
\end{equation*}%
since $g(x)\big/x\rightarrow 0$. It follows that, as $x\to\infty$,%
\begin{equation*}
\frac{g(x)}{x\overline{W}_{\alpha }(x)}\frac{m(\alpha )}{2\alpha x^{\alpha
-1}}I\rightarrow -\frac{1}{m(\alpha )}\text{.}
\end{equation*}

For the second term. We have%
\begin{equation*}
II-\frac{x^{2\alpha }f(x)}{m^{2}(\alpha )}=x^{2\alpha }f(x)\left(\frac{1}{%
x^{2\alpha }\overline{G}_{F}^{2}(x)}-\frac{1}{m^{2}(\alpha )}\right)\textrm{.}
\end{equation*}

Earlier we proved that%
\begin{equation*}
\frac{1}{x^{2\alpha }\overline{G}_{F}^{2}(x)}-\frac{1}{m^{2}(\alpha )}\sim
\frac{2\alpha }{m^{3}(\alpha )}\overline{W}_{\alpha }(x)\text{.}
\end{equation*}

Now we find that
\begin{eqnarray*}
II-\frac{x^{2\alpha }f(x)}{m^{2}(\alpha )} &\sim &\frac{2\alpha x^{2\alpha
}f(x)}{m^{3}(\alpha )}\overline{W}_{\alpha }(x) \\
&\sim &\frac{2\alpha x^{2\alpha }\overline{F}(x)}{g(x)m^{3}(\alpha )}%
\overline{W}_{\alpha }(x)\textrm{,}
\end{eqnarray*}%
and hence, as $x\to\infty$,
\begin{equation*}
\frac{g(x)}{x\overline{W}_{\alpha }(x)}\frac{m(\alpha )}{2\alpha x^{\alpha
-1}}\left(II-\frac{x^{2\alpha }f(x)}{m^{2}(\alpha )}\right)\sim \frac{x^{\alpha }%
\overline{F}(x)}{m^{2}(\alpha )}\rightarrow 0\textrm{.}
\end{equation*}

For the third term. We have

\begin{eqnarray*}
III &=&2\frac{\alpha x^{\alpha }\overline{F}(x)x^{\alpha -1}H_{\alpha }(x)}{%
x^{3\alpha }\overline{G}_{F}^{3}(x)} \\
&\sim &2\frac{\alpha x^{\alpha }\overline{F}(x)x^{\alpha -1}m(\alpha )}{%
m^{3}(\alpha )} \\
&\sim &2\frac{\alpha x^{\alpha }\overline{W}_{\alpha }(x)}{g(x)m^{2}(\alpha )%
} \\
&\sim &2\frac{x}{g(x)}\overline{W}_{\alpha }(x)\frac{\alpha x^{\alpha -1}}{%
m^{2}(\alpha )}\textrm{,}
\end{eqnarray*}%
and hence, as $x\to\infty$,%
\begin{equation*}
\frac{g(x)}{x\overline{W}_{\alpha }(x)}\frac{m(\alpha )}{2\alpha x^{\alpha
-1}}III\rightarrow \frac{1}{m(\alpha )}\textrm{.}
\end{equation*}

Everything together we find, as $x\to\infty$,
\begin{equation*}
\frac{g(x)}{x\overline{W}_{\alpha }(x)}\frac{m(\alpha )}{2\alpha x^{\alpha
-1}}\left(R^{\prime }(x)-\frac{2\alpha x^{\alpha -1}}{m(\alpha )}-\frac{%
x^{2\alpha }f(x)}{m^{2}(\alpha )}\right)\rightarrow -\frac{2}{m(\alpha )}\textrm{.}
\end{equation*}%
Equivalently, using $\overline{W}_{\alpha }(x)\sim g(x)x^{\alpha -1}%
\overline{F}(x)$, we have
\begin{equation*}
\frac{1}{x^{2\alpha -1}\overline{F}(x)}\left(R^{\prime }(x)-\frac{2\alpha
x^{\alpha -1}}{m(\alpha )}-\frac{x^{2\alpha }f(x)}{m^{2}(\alpha )}%
\right)\rightarrow -\frac{4\alpha }{m^{2}(\alpha )}\textrm{.}
\end{equation*}

Note that we have, as $x\to\infty$,%
\begin{equation*}
\frac{x^{2\alpha -1}\overline{F}(x)}{x^{2\alpha }f(x)}=\frac{\overline{F}(x)%
}{xf(x)}\sim \frac{g(x)}{x}\rightarrow 0\text{.}
\end{equation*}%
\bigskip

It follows that, as $x\to\infty$,%
\begin{equation*}
\frac{m^{2}(\alpha )}{x^{2\alpha }f(x)}\left(R^{\prime }(x)-\frac{2\alpha
x^{\alpha -1}}{m(\alpha )}\right)\rightarrow 1\text{.}
\end{equation*}

We have proved the following result.

\begin{theorem}\label{teo10}
Assume that $F(x)$ has a density $%
f(x)$.
Let $R(x)$ be the Kendall renewal function defined by (\ref{Kendallrenewalfunction}).
Suppose that $f\in \Gamma (g)$. Then
\begin{equation*}
\lim_{x\to\infty}\frac{1}{x^{2\alpha -1}\overline{F}(x)}\left(R^{\prime }(x)-\frac{2\alpha
x^{\alpha -1}}{m(\alpha )}-\frac{x^{2\alpha }f(x)}{m^{2}(\alpha )}%
\right)= -\frac{4\alpha }{m^{2}(\alpha )}\textrm{,}
\end{equation*}%
and%
\begin{equation*}
\lim_{x\to\infty}\frac{m^{2}(\alpha )}{x^{2\alpha }f(x)}\left(R^{\prime }(x)-\frac{2\alpha
x^{\alpha -1}}{m(\alpha )}\right)= 1\text{.}
\end{equation*}
\end{theorem}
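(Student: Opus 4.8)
The plan is to work from the exact expression for $R^{\prime}(x)$ obtained in the proof of Lemma~\ref{lemma7},
\[
R^{\prime}(x)=2\frac{\alpha x^{-\alpha-1}H_{\alpha}(x)}{\overline{G}_F^{2}(x)}+\frac{f(x)}{\overline{G}_F^{2}(x)}-2\frac{\alpha\overline{F}(x)x^{-\alpha-1}H_{\alpha}(x)}{\overline{G}_F^{3}(x)},
\]
to subtract the main term $2\alpha x^{\alpha-1}/m(\alpha)$, and to split the difference as $R^{\prime}(x)-2\alpha x^{\alpha-1}/m(\alpha)=I+II-III$, where $I$ is the first summand minus the main term, $II$ the second summand, and $III$ the third. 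Everything then rests on three second-order facts already available: first, the relation $m^{2}(\alpha)/(x^{2\alpha}\overline{G}_F^{2}(x))-1\sim(2\alpha/m(\alpha))\overline{W}_{\alpha}(x)$ established around~(\ref{5}); second, the $\Gamma(g)$ properties $\overline{F}(x)\sim f(x)g(x)$ and, by~(\ref{7}) applied to $x^{\alpha-1}\overline{F}(x)\in\Gamma(g)$, $\overline{W}_{\alpha}(x)\sim g(x)x^{\alpha-1}\overline{F}(x)$; third, the identity $\overline{H}_{\alpha}(x)=\alpha\overline{W}_{\alpha}(x)+x^{\alpha}\overline{F}(x)$, which together with the previous point and $g(x)/x\to0$ gives $\overline{H}_{\alpha}(x)/\overline{W}_{\alpha}(x)=\alpha+(1+o(1))x/g(x)\sim x/g(x)$.

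For the first term I would write $m(\alpha)H_{\alpha}(x)/(x^{2\alpha}\overline{G}_F^{2}(x))-1=\frac{H_{\alpha}(x)}{m(\alpha)}\bigl(m^{2}(\alpha)/(x^{2\alpha}\overline{G}_F^{2}(x))-1\bigr)-\overline{H}_{\alpha}(x)/m(\alpha)$, use $H_{\alpha}(x)\to m(\alpha)$ and the first fact to reach $\frac{1}{\overline{W}_{\alpha}(x)}\frac{m(\alpha)}{2\alpha x^{\alpha-1}}I=(1+o(1))\frac{2\alpha}{m(\alpha)}-\frac{1}{m(\alpha)}\frac{\overline{H}_{\alpha}(x)}{\overline{W}_{\alpha}(x)}$; since the last ratio blows up like $x/g(x)$, the right normalization carries an extra factor $g(x)/x$, and one obtains $\frac{g(x)}{x\overline{W}_{\alpha}(x)}\frac{m(\alpha)}{2\alpha x^{\alpha-1}}I\to-1/m(\alpha)$. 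For the second term I would compute $II-x^{2\alpha}f(x)/m^{2}(\alpha)=x^{2\alpha}f(x)\bigl(1/(x^{2\alpha}\overline{G}_F^{2}(x))-1/m^{2}(\alpha)\bigr)\sim(2\alpha/m^{3}(\alpha))x^{2\alpha}f(x)\overline{W}_{\alpha}(x)$, replace $f(x)$ by $\overline{F}(x)/g(x)$, and check that under the same normalization this is $\sim x^{\alpha}\overline{F}(x)/m^{2}(\alpha)\to0$ (recall $x^{\alpha}\overline{F}(x)\to0$ since $m(\alpha)<\infty$). For the third term, $III\sim(2\alpha/m^{2}(\alpha))x^{2\alpha-1}\overline{F}(x)$, and writing $x^{\alpha}\overline{F}(x)\sim(x/g(x))\overline{W}_{\alpha}(x)$ turns the same normalization of $III$ into $1/m(\alpha)$. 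Summing, $\frac{g(x)}{x\overline{W}_{\alpha}(x)}\frac{m(\alpha)}{2\alpha x^{\alpha-1}}\bigl(R^{\prime}(x)-\frac{2\alpha x^{\alpha-1}}{m(\alpha)}-\frac{x^{2\alpha}f(x)}{m^{2}(\alpha)}\bigr)\to-2/m(\alpha)$; finally, since $g(x)/(x^{\alpha}\overline{W}_{\alpha}(x))\sim 1/(x^{2\alpha-1}\overline{F}(x))$, multiplying by $2\alpha/m(\alpha)$ rearranges this into the first displayed limit, with value $-4\alpha/m^{2}(\alpha)$.

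The second displayed limit should follow at once: the first one shows $R^{\prime}(x)-\frac{2\alpha x^{\alpha-1}}{m(\alpha)}-\frac{x^{2\alpha}f(x)}{m^{2}(\alpha)}=O\bigl(x^{2\alpha-1}\overline{F}(x)\bigr)$, and since $f\in\Gamma(g)$ forces $x^{2\alpha-1}\overline{F}(x)/(x^{2\alpha}f(x))=\overline{F}(x)/(xf(x))\sim g(x)/x\to0$, this remainder is $o\bigl(x^{2\alpha}f(x)\bigr)$, whence $\frac{m^{2}(\alpha)}{x^{2\alpha}f(x)}\bigl(R^{\prime}(x)-\frac{2\alpha x^{\alpha-1}}{m(\alpha)}\bigr)\to1$. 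I expect the only real difficulty to be the bookkeeping: three competing scales $x^{\alpha-1}$, $x^{2\alpha}f(x)$ and $x^{2\alpha-1}\overline{F}(x)$ are in play, the last being one factor $g(x)/x$ smaller than the middle one, and the delicate point is to confirm that, after the $\Gamma$-normalization $\frac{g(x)}{x\overline{W}_{\alpha}(x)}\frac{m(\alpha)}{2\alpha x^{\alpha-1}}$, the term $II$ genuinely vanishes while $I$ and $III$ each contribute $\mp1/m(\alpha)$ --- i.e.\ that $x^{2\alpha}f(x)/m^{2}(\alpha)$ is exactly the correct second main term and the next correction lives at order $x^{2\alpha-1}\overline{F}(x)$.
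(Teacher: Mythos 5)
Your proposal is correct and follows essentially the same route as the paper: the same decomposition $R'(x)-2\alpha x^{\alpha-1}/m(\alpha)=I+II-III$, the same normalization $\frac{g(x)}{x\overline{W}_{\alpha}(x)}\frac{m(\alpha)}{2\alpha x^{\alpha-1}}$ under which $I\to-1/m(\alpha)$, $II-x^{2\alpha}f(x)/m^{2}(\alpha)\to0$ and $III\to1/m(\alpha)$, and the same final conversion via $\overline{W}_{\alpha}(x)\sim g(x)x^{\alpha-1}\overline{F}(x)$ and $\overline{F}(x)/(xf(x))\sim g(x)/x\to0$. Nothing to add.
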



\subsection{Rate of convergence in Blackwell's result}

Earlier, we have proved a Blackwell type of result, i.e. $R(x+y)-R(x)\sim
2\alpha x^{\alpha -1}y\big/m(\alpha )$. We want to use Theorems \ref{teo9} and \ref{teo10} to
find a rate of convergence result here.

\subsubsection{Regularly varying case}

Clearly for $y>0$ we have $R(x+y)-R(x)=\int_{x}^{x+y}R^{\prime }(t)dt$.
Using Theorem \ref{teo9}, we see that%
\begin{equation*}
\int_{x}^{x+y}\left(R^{\prime }(t)-\frac{2\alpha t^{\alpha -1}}{m(\alpha )}%
\right)dt\sim C\int_{x}^{x+y}t^{2\alpha -1}\overline{F}(t)dt\sim Cx^{2\alpha -1}%
\overline{F}(x)y\textrm{.}
\end{equation*}%
It also follows that%
\begin{eqnarray*}
&&R(x+y)-R(x)-\frac{2\alpha }{m(\alpha )}x^{\alpha -1}y \\
&=&\int_{x}^{x+y}\left(R^{\prime }(t)-\frac{2\alpha t^{\alpha -1}}{m(\alpha )}\right)dt+%
\frac{2\alpha }{m(\alpha )}\int_{x}^{x+y}(t^{\alpha -1}-x^{\alpha -1})dt\textrm{.}
\end{eqnarray*}

Hence we find that%
\begin{eqnarray*}
&&R(x+y)-R(x)-\frac{2\alpha }{m(\alpha )}x^{\alpha -1}y \\
&=&(1+o(1))Cx^{2\alpha -1}\overline{F}(x)y+(1+o(1))\frac{2\alpha (\alpha -1)%
}{m(\alpha )}x^{\alpha -2}y^{2}\textrm{,}
\end{eqnarray*}

or%
\begin{eqnarray*}
&&x^{1-\alpha }(R(x+y)-R(x))-\frac{2\alpha }{m(\alpha )}y \\
&=&(1+o(1))Cx^{\alpha }\overline{F}(x)y+(1+o(1))\frac{2\alpha (\alpha -1)}{%
m(\alpha )}x^{-1}y^{2}\textrm{.}
\end{eqnarray*}%

\bigskip

If $x^{1+\alpha }\overline{F}(x)\rightarrow 0$, we find that
\begin{equation*}
x\left( x^{1-\alpha }(R(x+y)-R(x))-\frac{2}{m(\alpha )}y\right) \rightarrow
\frac{2\alpha (\alpha -1)}{m(\alpha )}y^{2}\text{.}
\end{equation*}%
\bigskip

This is the case when $m(1+\alpha )<\infty $.

\bigskip

If $x^{1+\alpha }\overline{F}(x)\rightarrow D$, where $0<D\leq \infty $,
we find that
\begin{eqnarray*}
&&\frac{1}{x^{\alpha }\overline{F}(x)}\left( x^{1-\alpha }(R(x+y)-R(x))-%
\frac{2\alpha }{m(\alpha )}y\right)  \\
&\rightarrow &Cy+\frac{2\alpha (\alpha -1)}{Dm(\alpha )}y^{2}\text{.}
\end{eqnarray*}

\subsubsection{The Gamma class case}

Using Theorem \ref{teo10}, we have%
\begin{eqnarray*}
\int_{x}^{x+y}\left(R^{\prime }(t)-\frac{2\alpha t^{\alpha -1}}{m(\alpha )}\right)dt
&\sim &\frac{1}{m^{2}(\alpha )}\int_{x}^{x+y}t^{2\alpha }f(t) \\
&\sim &\frac{x^{2\alpha }}{m^{2}(\alpha )}(F(x+y)-F(x))\textrm{.}
\end{eqnarray*}

It follows that%
\begin{eqnarray*}
R(x+y)-R(x)-\frac{2\alpha x^{\alpha -1}}{m(\alpha )}y &=&(1+o(1))\frac{%
x^{2\alpha }}{m^{2}(\alpha )}(F(x+y)-F(x)) \\
&&+(1+o(1))\frac{2\alpha (\alpha -1)}{m(\alpha )}x^{\alpha -2}y^{2}\textrm{,}
\end{eqnarray*}%
and, as $x\to\infty$,%
\begin{eqnarray*}
\lefteqn{x^{2-\alpha }\left( R(x+y)-R(x)-\frac{2\alpha x^{\alpha -1}}{m(\alpha )}%
y\right)} \\  &=&(1+o(1))\frac{x^{2+\alpha }}{m^{2}(\alpha )}(F(x+y)-F(x)) 
+(1+o(1))\frac{2\alpha (\alpha -1)}{m(\alpha )}y^{2} \\
&\rightarrow &\frac{2\alpha (\alpha -1)}{m(\alpha )}y^{2}\textrm{,}
\end{eqnarray*}%
since all moments are finite it follows that $x^{2+\alpha
}(F(x+y)-F(x))\rightarrow 0$ as $x\to\infty$.

\section{Concluding remarks}

\begin{enumerate}
	\item 
	We can also study weighted renewal functions of the form%
\begin{equation*}
WR(x)=\sum_{n=0}^{\infty }a_{n}F^{\boxtimes n}(x)\textrm{,}
\end{equation*}%
where $(a_{n})$ is a sequence of positive numbers. In this case the $G-$%
transform of $WR(x)$ is given by%
\begin{equation*}
G_{WR}(x)=\sum_{n=0}^{\infty }a_{n}G_{F}^{n}(x)=A(G_{F}(x))\text{,}
\end{equation*}%
where $A(z)=\sum_{n=0}^{\infty }a_{n}z^{n}$ is the generating function of
the sequence of weights.

	\item 
Recall that $F^{\boxtimes n}(x)$ is the d.f. of the Kendall sum $S_{\boxtimes
n}$ of independent and identically distributed random elements. If $(a_{n})$ is the probability density function of a discrete random
variable $N$, then $WR(x)$ is the d.f. of the random sum $S_{\boxtimes N}$.

	\item 
The Williamson transform can be written as $G_{F}(x)=\int_{0}^{x}P(Z\geq
t/x)dF(t)=P(X/Z\leq x)$, where $Z$ denotes a positive r.v. with d.f. $%
P(Z\leq x)=x^{\alpha },0\leq x\leq 1$. It could be\ interesting to study a
transform where we replace this d.f. of $Z$ by another d.f.

\end{enumerate}









\section*{Disclosure statement}
No potential conflict of interests was reported by the authors.

\end{document}